\documentclass[pdflatex,sn-mathphys-num]{sn-jnl}
\usepackage{amsmath,amssymb,amsfonts,amsthm}
\usepackage{booktabs,mathtools,microtype}
\usepackage{xurl}
\providecommand{\burl}[1]{\url{#1}}
\theoremstyle{thmstyleone}
\newtheorem{theorem}{Theorem}[section]
\newtheorem{lemma}[theorem]{Lemma}
\newtheorem{proposition}[theorem]{Proposition}
\newtheorem{corollary}[theorem]{Corollary}
\theoremstyle{thmstyletwo}
\newtheorem{remark}[theorem]{Remark}
\newtheorem{example}[theorem]{Example}
\theoremstyle{thmstylethree}
\newtheorem{definition}[theorem]{Definition}
\newtheorem{assumption}[theorem]{Assumption}
\numberwithin{equation}{section}
\newcommand{\dd}{\,\mathrm{d}}
\newcommand{\R}{\mathbb R}

\newcommand{\Nzero}{\mathbb N_0}
\newcommand{\BL}{\mathrm{BL}}
\newcommand{\TV}{\mathrm{TV}}
\newcommand{\Lip}{\operatorname{Lip}}
\newcommand{\supp}{\operatorname{supp}}
\newcommand{\spanof}{\operatorname{span}}
\newcommand{\norm}[1]{\left\lVert #1\right\rVert}
\newcommand{\abs}[1]{\left\lvert #1\right\rvert}
\newcommand{\ip}[2]{\left\langle #1,#2\right\rangle}
\newcommand{\calM}{\mathcal M}
\newcommand{\calP}{\mathcal P}
\newcommand{\calJ}{\mathcal J}

\newcommand{\calK}{\mathcal K}
\newcommand{\rhoAw}{\rho_{A,w}}
\newcommand{\rhoA}{\rho_A}
\newcommand{\rhoN}{\rho_{A_N}}

\begin{document}
\hypersetup{pdftitle={Sharp spectral-scale stability for parabolic equations with measure-valued delay},pdfauthor={Lennon J. Shikhman},pdfsubject={Spectral moduli and regularity thresholds for measure-valued delay},pdfkeywords={measure-valued delay, spectral-scale stability, logarithmic regularity, parabolic equations, atomic quadrature}}
\title[Spectral-scale stability for measure-valued delay]{Sharp spectral-scale stability for parabolic equations with measure-valued delay}
\author*[1,2]{\fnm{Lennon J.} \sur{Shikhman}}\email{lshikhman2022@fit.edu}
\affil*[1]{\orgdiv{College of Computing}, \orgname{Georgia Institute of Technology}, \orgaddress{\city{Atlanta}, \state{Georgia}, \country{USA}}}
\affil[2]{\orgdiv{Department of Mathematics and Systems Engineering}, \orgname{Florida Institute of Technology}, \orgaddress{\city{Melbourne}, \state{Florida}, \country{USA}}}
\abstract{We study the dependence of parabolic solution operators on a finite signed measure describing the delay law. For a positive self-adjoint generator with compact inverse, a weighted dyadic spectral sum characterizes the norm of the semigroup-integrated memory perturbation. An exact realization identity transfers the matching lower estimate to two positive point delays in a fixed linear equation, using a common smooth, finite-spectral history. For the Dirichlet Laplacian on a nonempty bounded open set, the sharp worst-case modulus on a bounded ball of continuous $L^2$-valued histories is $d\sqrt{\log(e/d)}$, where $d$ is the bounded-Lipschitz distance between the delay measures. This is a rough-history endpoint result: logarithmic spatial regularity of order $\gamma>1/2$ restores resolution-uniform Lipschitz stability in the linear model. The critical order $\gamma=1/2$ retains a square-root double-logarithmic loss. More generally, a reciprocal-square summability criterion over occupied spectral bands gives the exact weighted threshold, including sparse spectra. Consequences include sharp finite-resolution Lipschitz constants, worst-case errors for prescribed midpoint quadrature of the memory measure, and semilinear upper estimates under Hilbert-space-valued local Lipschitz assumptions. The results distinguish sensitivity to the delay law from spatial approximation error and do not preclude uniform approximation of positive-time states on rough history balls.}
\keywords{measure-valued delay, spectral-scale stability, logarithmic regularity, analytic semigroup, spectral Galerkin approximation, atomic quadrature}
\pacs[MSC Classification]{35K57, 34K30, 47D06, 65M12, 65M70}
\maketitle
\raggedbottom

\section{Introduction}\label{sec:intro}
How does the geometry of the spatial spectrum determine sensitivity to a measure-valued delay? We study this question for parabolic equations of the form
\begin{equation}\label{eq:intro-pde}
 \partial_tu+Au+f(u)
 =\int_{-r}^0 B(u(t+\theta))\dd\mu(\theta)+h,
 \qquad u|_{[-r,0]}=\phi,
\end{equation}
where $A$ is positive and self-adjoint on a Hilbert space $H$, and $\mu$ is a finite signed Borel measure. The principal results concern the linear identity-feedback case $f=0$, $B=\mathrm{Id}$, $h=0$. The prototype is the Dirichlet Laplacian on $L^2(\Omega)$. We normalize the lowest eigenvalue to one. Histories are continuous with values in $H$; boundedness of a history class does not impose a common temporal modulus or spatial regularity bound.

For the Dirichlet Laplacian we prove the sharp worst-case modulus
\begin{equation}\label{eq:intro-modulus}
 \Psi(d)=d\sqrt{\log(e/d)},\qquad 0<d\leq1,
\end{equation}
where $d=d_{\BL}(\mu,\nu)$ is the bounded-Lipschitz distance. Sharpness means a uniform upper estimate on the specified history ball and a matching lower estimate for a family of histories and kernel pairs. It does not mean a two-sided estimate for every pair of kernels. The lower estimate is realized with two positive, unit-mass point delays, so neither nonlinear instability nor signed cancellation is responsible for the loss.

The regularity qualification is part of the main result. In the linear model, a uniform bound in $C([-r,0];D((1+\log A)^\gamma))$ restores Lipschitz dependence when $\gamma>1/2$; at $\gamma=1/2$ a square-root double-logarithmic loss remains. Every uniform positive spatial-power bound also suffices. Thus~\eqref{eq:intro-modulus} describes a rough-history endpoint, not a universal barrier on regular data classes. Sensitivity estimates on more regular history classes must be assessed using the corresponding weighted modulus. The precise threshold, including its dependence on the occupied spectral bands, is one of the main conclusions.

The semigroup formulation of partial functional differential equations is classical; see Travis and Webb~\cite{travis1974}, Wu~\cite{wu1996}, and B\'atkai and Piazzera~\cite{batkai2005}. Casas, Mateos and Tr\"oltzsch~\cite{casas2018} study measure control of a semilinear parabolic equation, including differentiability and control-space approximation. Kryspin and Mierczy\'nski~\cite{kryspin2024} treat regularization and weak-topology parameter dependence for linear parabolic delay systems. Shikhman~\cite{shikhman2026} develops the finite signed-measure framework for reaction--diffusion equations, with total-variation stability, weak-star convergence, and attractor upper semicontinuity. The present paper retains that measure formulation but addresses a different question: the sharp spectral modulus and the spatial regularity required for Lipschitz dependence. For completeness, we give self-contained solution constructions and all quantitative arguments used below.

\subsection{Spectral characterization and exact realization}
Our basic operator is
\[
 \mathcal K_a g=\int_0^\infty(I-e^{-aA})e^{-sA}g(s)\dd s.
\]
If $\mathcal J(A)$ is the set of occupied dyadic spectral bands and $w$ is a nondecreasing doubling weight, Theorem~\ref{thm:spectral} proves
\begin{equation}\label{eq:intro-profile}
 \norm{\mathcal K_a}_{L^\infty(D(w(A)))\to H}
 \asymp
 \left(\sum_{k\in\mathcal J(A)}
       \frac{\min\{a,2^{-k}\}^2}{w(2^k)^2}\right)^{1/2}.
\end{equation}
The constants are independent of dimension, eigenvalue multiplicity, and the locations of the occupied bands. Theorem~\ref{thm:lower} then realizes this profile inside the delay equation. A common history is chosen so that the difference at a fixed observation time satisfies the exact identity
\begin{equation}\label{eq:intro-realization}
 u(T)-v(T)=\int_0^{T/2}(I-e^{-aA})e^{-sA}h_a(s)\dd s.
\end{equation}
There are no endpoint remainders. The pulse $h_a$ is smooth, compactly supported in time, and finite-spectral. This identity makes the lower bound a statement about actual parabolic solutions, not only an auxiliary convolution norm.

Orthogonality across spectral bands explains the square root in~\eqref{eq:intro-modulus}. Integrating the usual operator-norm estimate
\begin{equation}\label{eq:intro-crude}
 \norm{(I-e^{-aA})e^{-sA}}_{\mathcal L(H)}
 \lesssim\min\{1,a/s\}e^{-s/2}
\end{equation}
would instead give $a\log(e/a)$. Standard analytic-semigroup bounds are treated in~\cite{pazy1983}; the square-function viewpoint belongs to the classical Littlewood--Paley theory for semigroups~\cite{stein1970}. The scalar modulus~\eqref{eq:intro-modulus}, like $d\log(e/d)$, satisfies the classical Osgood condition $\int_{0}^{1}\Psi(d)^{-1}\dd d=\infty$~\cite{osgood1898}. Neither the use of a logarithmic modulus nor the square-function principle is claimed as new. The quantitative content here is the occupied-band formula and its exact realization as a perturbation of the delay law.

The endpoint failure is also consistent with the established maximal-regularity obstruction for unbounded generators on Hilbert spaces. Baillon's theorem is discussed by Eberhardt and Greiner~\cite{eberhardt1992}; refinements and connections to admissibility appear in Jacob, Schwenninger and Wintermayr~\cite{jacob2022} and Preu\ss ler and Schwenninger~\cite{preussler2026}. Formula~\eqref{eq:intro-profile} quantifies the present perturbation family. It yields the necessary and sufficient condition
\begin{equation}\label{eq:intro-threshold}
 \sum_{k\in\mathcal J(A)}w(2^k)^{-2}<\infty
\end{equation}
for Lipschitz kernel dependence on the weighted history ball. Theorem~\ref{thm:log-hierarchy} identifies the critical logarithmic order for elliptic spectra; sparse spectra can have different thresholds and rates.

\subsection{Approximation consequences and their scope}
For a spectral truncation with largest eigenvalue $\lambda_N$, elliptic spectral density reduces the unweighted profile to
\begin{equation}\label{eq:intro-phase}
 a\sqrt{1+\log\!\bigl(\min\{\lambda_N,a^{-1}\}\bigr)}.
\end{equation}
The optimal Lipschitz constant therefore grows as $\sqrt{1+\log\lambda_N}$ on the rough history ball. Without spectral density the correct quantity is the square root of the number of occupied bands, not the spectral radius.

Numerical approximation of delay and Volterra equations has a substantial separate literature. Brunner~\cite{brunner2004} develops collocation for Volterra and related functional differential equations; Breda, Maset and Vermiglio~\cite{breda2015} study pseudospectral approximation of delay generators and solution operators for stability analysis. Pr\"uss~\cite{pruss1993} treats evolutionary Volterra equations with operator-valued kernels. We do not replace those analyses: our quadrature theorem concerns the propagated error when a finite delay measure is replaced by cell masses at prescribed midpoint nodes. Its worst-case rate is~\eqref{eq:intro-modulus} with $d$ replaced by the cell width. Adaptive recovery of atoms and higher-order approximation of a fixed smooth kernel are outside that assertion.

Classical spatial approximation estimates, such as those in Thom\'ee~\cite{thomee2006} and Auestad~\cite{auestad2026}, concern another error. We give a joint estimate that retains the initial-history projection defect, and we separately show that the commuting linear model admits uniform approximation of its positive-time states even on the rough history ball. Hence growing kernel sensitivity and convergence of spatial approximations can coexist. The semilinear extension is an upper-bound application under $H$-valued bounded-ball Lipschitz assumptions; it is not a sharp theorem for arbitrary nonlinear reactions.

Sections~\ref{sec:spectral}--\ref{sec:proof-spectral} establish the spectral norm formula. Section~\ref{sec:measure} transfers it to signed-measure perturbations. Section~\ref{sec:lower} constructs the linear solution operators and proves the exact lower-bound realization; Section~\ref{sec:regularity} gives the regularity threshold. Section~\ref{sec:semilinear} supplies the self-contained semilinear upper-bound extension. Section~\ref{sec:discrete} treats spectral resolution, prescribed memory quadrature, and spatial approximation, ending with a comparison between uniform approximation accuracy and Lipschitz stability in the delay law.

\section{Spectral setting and the principal norm formula}\label{sec:spectral}
Let $H$ be a separable real Hilbert space. We allow both finite and infinite dimension. Let $A$ be positive and self-adjoint, with an orthonormal eigenbasis $(e_j)$ and eigenvalues
\begin{equation}\label{eq:eigenvalues}
 1=\lambda_1\leq\lambda_2\leq\cdots.
\end{equation}
In infinite dimension, assume $A^{-1}$ is compact, so $\lambda_j\to\infty$. The normalization $\lambda_1=1$ fixes the time unit. For an unnormalized operator, the spectral ratios below are $\lambda_j/\lambda_1$, and time displacements are measured in units of $\lambda_1^{-1}$. Put $E(t)=e^{-tA}$. Thus $\norm{E(t)}\leq e^{-t}$.

For $k\in\Nzero$, define the spectral projection
\begin{equation}\label{eq:bands}
 P_k=\mathbf 1_{[2^k,2^{k+1})}(A),
 \qquad \calJ(A)=\{k\in\Nzero:P_k\ne0\}.
\end{equation}
These are band projections, not the Galerkin projections used later. In particular $0\in\calJ(A)$.

\begin{definition}[Admissible spatial weight]\label{def:weight}
A weight is a nondecreasing Borel function $w:[1,\infty)\to[1,\infty)$ satisfying $w(1)=1$ and
\begin{equation}\label{eq:doubling}
 w(2x)\leq D_w w(x),\qquad x\geq1,
\end{equation}
for a finite constant $D_w$. Set
\[
 H_w=D(w(A)),\qquad \norm{x}_{H_w}=\norm{w(A)x}_H,
 \qquad w_k=w(2^k).
\]
The norm is complete, and $w(A)^{-1}$ is an isometry from $H$ onto $H_w$.
\end{definition}
Examples are $w\equiv1$, $w(x)=x^\beta$ for $\beta>0$, and
\begin{equation}\label{eq:log-weight}
 w_\gamma(x)=(1+\log x)^\gamma,\qquad \gamma\geq0.
\end{equation}
Define, for $a\geq0$,
\begin{equation}\label{eq:rho}
 \rhoAw(a)=\left(\sum_{k\in\calJ(A)}
       \frac{\min\{a,2^{-k}\}^2}{w_k^2}\right)^{1/2},
 \qquad \rhoA=\rho_{A,1}.
\end{equation}
The sum converges for every $a$. For $0<a\leq1$,
\begin{equation}\label{eq:rho-properties}
 a\leq\rhoAw(a)\leq\rhoA(a)\leq C a\sqrt{\log(e/a)}.
\end{equation}
Moreover, $\rhoAw$ is increasing and $\rhoAw(a)/a$ is nonincreasing on $(0,\infty)$. In particular,
\begin{equation}\label{eq:rho-scale}
 \min\{1,c\}\rhoAw(a)\leq\rhoAw(ca)
 \leq\max\{1,c\}\rhoAw(a),\qquad c>0.
\end{equation}
The first lower bound in~\eqref{eq:rho-properties} uses the occupied band $k=0$. The upper bound follows by separating the sum at $\lfloor\log_2(a^{-1})\rfloor$. All other assertions follow term by term from~\eqref{eq:rho}.

The basic operator is
\begin{equation}\label{eq:K}
 \calK_a g=\int_0^\infty (I-E(a))E(s)g(s)\dd s,
 \qquad g\in L^\infty(0,\infty;H_w).
\end{equation}
This Bochner integral is well defined in $H$, since $\norm{(I-E(a))E(s)}\leq e^{-s}$. Its sharp norm will be denoted by $\kappa_{A,w}(a)$.

\begin{theorem}[Spectral-scale characterization]\label{thm:spectral}
For every admissible weight and every $a>0$,
\begin{equation}\label{eq:exact-dual}
 \kappa_{A,w}(a)
 =\sup_{\norm{v}_H=1}\int_0^\infty
   \norm{w(A)^{-1}(I-E(a))E(s)v}_H\dd s.
\end{equation}
There are constants $c_w,C_w>0$, depending only on $D_w$, such that
\begin{equation}\label{eq:spectral-norm}
 c_w\rhoAw(a)\leq\kappa_{A,w}(a)\leq C_w\rhoAw(a).
\end{equation}
The same operator norm is obtained by taking the supremum only over smooth, compactly supported functions of time whose values lie in a finite-dimensional spectral subspace. The constants are independent of the dimension, the eigenvalue multiplicities, and the locations of the nonempty bands.
\end{theorem}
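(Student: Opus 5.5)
Since $w(A)^{-1}$ is an isometry of $H$ onto $H_w$, every $g\in L^\infty(0,\infty;H_w)$ with norm at most one has the form $g=w(A)^{-1}f$ with $\norm{f}_{L^\infty(H)}\le1$. Hence $\kappa_{A,w}(a)$ is the norm of $f\mapsto\int_0^\infty T(s)f(s)\dd s$ from $L^\infty(H)$ to $H$, where $T(s)=w(A)^{-1}(I-E(a))E(s)$ is self-adjoint. I will prove the duality formula \eqref{eq:exact-dual} first. For the upper bound, pair with a unit vector $v$: $\ip{v}{\int T(s)f(s)\dd s}=\int\ip{T(s)v}{f(s)}\dd s\le\int\norm{T(s)v}\dd s$. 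For the lower bound, fix $v$ and take $f(s)=T(s)v/\norm{T(s)v}$ where this is nonzero. To get the stronger claim with smooth, compactly supported, finite-spectral test functions, I will first replace $v$ by its truncation $Q_Nv$ to finitely many eigenvectors; monotone convergence gives $\int\norm{T(s)Q_Nv}\dd s\to\int\norm{T(s)v}\dd s$. Then $s\mapsto T(s)Q_Nv$ is a real-analytic map into a finite-dimensional space, so it vanishes only at isolated points or identically. I will mollify $f$ and cut it off in time while keeping its norm at most one. By dominated convergence the pairing converges, so the supremum over the restricted class already equals \eqref{eq:exact-dual}.

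For \eqref{eq:spectral-norm} I will work with the band decomposition $v=\sum_k P_kv$ and write $v_k=\norm{P_kv}$. On the band $k$, $A$ has spectrum in $[2^k,2^{k+1})$. Doubling then gives $w(\lambda)\asymp w_k$ with constant $D_w$. I also have $1-e^{-a\lambda}\asymp\min\{1,a2^k\}$ and $e^{-s\lambda}$ between $e^{-2^{k+1}s}$ and $e^{-2^ks}$. Orthogonality yields
\[
 \norm{T(s)v}^2\asymp\sum_k w_k^{-2}\min\{1,a2^k\}^2 e^{-c2^ks}v_k^2 ,
\]
with $c=2$ in the lower bound and $c=1$ in the upper bound, both up to $D_w$-constants.

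\emph{Upper bound.} I integrate over dyadic time shells $s\in[2^{-m},2^{-m+1})$ for $m\ge1$, plus $s\ge1$. On a shell I bound the square root of the sum by Cauchy--Schwarz. This splits it into the factor $(\sum_k\min\{a,2^{-k}\}^2w_k^{-2})^{1/2}$ and the remaining factor $(\sum_k 2^{2k}e^{-2^ks}v_k^2)^{1/2}$. A simpler route is Cauchy--Schwarz in $s$ after weighting:
\[
\int_0^\infty\norm{T(s)v}\dd s\le\Bigl(\int_0^\infty\omega(s)^{-1}\dd s\Bigr)^{1/2}\Bigl(\int_0^\infty\omega(s)\norm{T(s)v}^2\dd s\Bigr)^{1/2},
\]
but the weight $\omega$ would have to be tied to $a$ and the bands, so I expect a shell-by-shell argument to be cleaner. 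Concretely, on shell $m$ only bands with $2^k\lesssim 2^m$ contribute non-negligibly. Define $b_k=\min\{a,2^{-k}\}w_k^{-1}$, so that $\norm{T(s)v}\lesssim\sum_k b_k2^k e^{-2^ks/2}v_k$ after using $\ell^2\subset\ell^1$ crudely. Integrating over $s$ gives $\sum_kb_kv_k\cdot O(1)$, and Cauchy--Schwarz in $k$ then gives $\rhoAw(a)\norm{v}$. I need to check the first step: replacing the square sum by the plain sum loses nothing after integration, because $\int_0^\infty (\sum_k x_k(s)^2)^{1/2}\dd s\le\sum_k\int x_k$. Indeed $\int 2^ke^{-2^ks}\dd s=1$, so this step is clean.

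\emph{Lower bound.} This is the main obstacle, since the Minkowski step above is lossy in general and I need a matching choice of $v$. I will pick one unit vector $e_{j_k}$ in each occupied band and set $v_k=b_k/\rhoAw(a)$. On the shell $s\in[2^{-m},2^{-m+1})$, the band $k=m$ (if occupied) contributes $\norm{T(s)v}\gtrsim b_m2^m v_m$. The shells are disjoint and each has length about $2^{-m}$, so summing gives $\int\norm{T(s)v}\dd s\gtrsim\sum_m b_mv_m=\rhoAw(a)$. Band $k=0$ pairs with $s\ge1$. The constants depend only on $D_w$ and the absolute exponential factors, and never on multiplicities, dimension or band locations, as the statement requires.
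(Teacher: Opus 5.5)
Your proof is correct and in substance follows the paper: duality via the normalized input $T(s)v/\norm{T(s)v}$, reduction to finite-spectral $v$ with a time cutoff, and the upper bound through $\norm{T(s)P_kv}\leq 2b_k2^ke^{-2^ks}\norm{P_kv}$, Minkowski in $s$, and Cauchy--Schwarz across bands. Your exploratory remarks on shells and weighted Cauchy--Schwarz for the upper bound can simply be deleted.

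The lower bound differs in one genuine respect. The paper localizes band $k$ to the eigenvalue-adapted window $[\eta_k^{-1},2\eta_k^{-1}]$. Adjacent occupied bands can give nearly coincident windows there, so the paper keeps a single parity class (losing $\sqrt2$) and exhausts $\calJ(A)$ by finite sets. You instead use the fixed dyadic shells $[2^{-k},2^{-k+1})$ for $k\geq1$ and $[1,\infty)$ for $k=0$, which are disjoint by construction. Since $s\eta_k<4$ on the $k$-th shell, keeping only the $e_{j_k}$ coordinate gives a contribution of at least $(1-e^{-1})e^{-4}D_w^{-1}b_k^2/\rhoAw(a)$, and similarly for $k=0$. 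Hence the single unit vector $\rhoAw(a)^{-1}\sum_{k\in\calJ(A)}b_ke_{j_k}$ works directly, with no parity thinning and no limiting argument. The only cost is a worse absolute constant ($e^{-4}$ in place of $e^{-1}-e^{-2}$). The device also transfers to Theorem~\ref{thm:lower}, because the shells of the bands $k\geq k_T$ lie in $(0,T/4]$ when $2^{k_T}\geq8/T$.

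Two inessential points. First, $T(s)Q_Nv$ never vanishes, since every multiplier $(1-e^{-a\lambda})e^{-s\lambda}/w(\lambda)$ is strictly positive; your $f$ is therefore already smooth, and the isolated-zeros and mollification steps are unnecessary. Second, the exponents in your squared-norm comparison should be $e^{-4\cdot2^ks}$ (lower) and $e^{-2\cdot2^ks}$ (upper). This does not matter, because neither half of your argument actually uses that display.
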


The use of occupied bands is essential. Formula~\eqref{eq:spectral-norm} contains no spectral-density hypothesis. Such a hypothesis is needed only to replace the spectral sum by a logarithm of the largest eigenvalue.

\begin{definition}[Bounded gaps between occupied bands]\label{def:gaps}
An infinite-dimensional operator satisfies the band-gap condition if there exist integers $k_0\geq0$ and $q\geq1$ such that
\begin{equation}\label{eq:gaps}
 \calJ(A)\cap\{k,k+1,\ldots,k+q-1\}\ne\varnothing
 \quad\hbox{for every }k\geq k_0.
\end{equation}
\end{definition}
For $w=1$, this condition implies
\begin{equation}\label{eq:rho-dense}
 \rhoA(a)\asymp a\sqrt{\log(e/a)},\qquad 0<a\leq1.
\end{equation}
Indeed, a fixed fraction of the bands below $\log_2(a^{-1})$ is occupied; the finitely many low bands are absorbed into the constants. The general weighted consequences are proved in Section~\ref{sec:regularity}.

\begin{proposition}[Elliptic example]\label{prop:elliptic}
Let $\Omega\subset\R^n$ be a nonempty bounded open set. Let the Dirichlet Laplacian be the positive self-adjoint operator associated with the form $\int_\Omega\nabla u\cdot\nabla v$ on $H_0^1(\Omega)$. Divided by its first eigenvalue, it satisfies~\eqref{eq:gaps}. Its eigenvalues satisfy
\begin{equation}\label{eq:eigen-growth}
 c_\Omega j^{2/n}\leq\lambda_j\leq C_\Omega j^{2/n}.
\end{equation}
Thus~\eqref{eq:rho-dense} holds in every spatial dimension.
\end{proposition}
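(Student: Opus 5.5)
The plan is to prove the two-sided Weyl-type bound \eqref{eq:eigen-growth} by Dirichlet--Neumann bracketing against cubes, then to derive the band-gap condition \eqref{eq:gaps} from it, and finally to read off \eqref{eq:rho-dense}. Let $-\Delta_\Omega$ denote the form operator on $H_0^1(\Omega)$ with eigenvalues $\mu_j$, so that $\lambda_j=\mu_j/\mu_1$. The inverse is compact by Rellich's theorem for $H_0^1(\Omega)$, which holds for any bounded open set with no boundary regularity. This follows by extending functions by zero into a large cube $Q\supset\Omega$ and using compactness of $H_0^1(Q)\hookrightarrow L^2(Q)$.

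\textbf{Upper bound on $\mu_j$.} Since $\Omega$ is nonempty and open, it contains a closed cube $Q_0$ of side $\ell>0$. Extension by zero embeds $H_0^1(Q_0)$ into $H_0^1(\Omega)$ isometrically for both the form and the $L^2$ norm. Min--max then gives $\mu_j(\Omega)\le\mu_j^D(Q_0)$. The Dirichlet eigenvalues of a cube are explicit: $\pi^2\ell^{-2}\abs{m}^2$ for $m\in\mathbb N^n$. Counting lattice points gives $\#\{m:\abs{m}^2\le R\}\ge c_n R^{n/2}$ for $R\ge n$, hence $\mu_j^D(Q_0)\le C j^{2/n}$.

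\textbf{Lower bound on $\mu_j$.} Take a cube $Q\supset\Omega$ of side $L$. Extension by zero embeds $H_0^1(\Omega)$ into $H^1(Q)$, which is the Neumann form domain. Min--max again gives $\mu_j(\Omega)\ge\mu_j^N(Q)$. The Neumann eigenvalues of the cube are $\pi^2L^{-2}\abs{m}^2$ for $m\in\Nzero^n$. The count satisfies $\#\{m:\abs{m}^2\le R\}\le C_n(1+R)^{n/2}$, which gives $\mu_j^N(Q)\ge c\,j^{2/n}$ for $j\ge 2$. For $j=1$ we use $\mu_1(\Omega)>0$, from the Poincaré inequality on bounded sets. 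Dividing by $\mu_1$ gives \eqref{eq:eigen-growth} with constants depending on $\Omega$.

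\textbf{Band gaps.} Suppose the $q$ consecutive bands $k,\dots,k+q-1$ are all empty. Then no $\lambda_j$ lies in $[2^k,2^{k+q})$. Let $j$ be the largest index with $\lambda_j<2^k$; it exists because $\lambda_1=1\le 2^k$ and $\lambda_j\to\infty$. Then $\lambda_{j+1}\ge 2^{k+q}$. By \eqref{eq:eigen-growth}, the ratio $\lambda_{j+1}/\lambda_j$ is at most
\[
\frac{C_\Omega (j+1)^{2/n}}{c_\Omega j^{2/n}}\le \frac{C_\Omega}{c_\Omega}\,2^{2/n}.
\]
The same ratio is greater than $2^q$. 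Choosing $q$ with $2^q\ge 2^{2/n}C_\Omega/c_\Omega$ yields a contradiction. This gives \eqref{eq:gaps} with $k_0=0$. Then \eqref{eq:rho-dense} follows from the remark after Definition~\ref{def:gaps}, combined with the upper bound in \eqref{eq:rho-properties}.

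\textbf{Main obstacle.} The delicate point is that $\Omega$ is an arbitrary bounded open set, possibly with very rough boundary. For this reason every comparison is made through zero extension and min--max against cubes; no Weyl asymptotics or boundary regularity is used. The only remaining care is the lattice-point counting, which needs explicit uniform constants, in particular handling small $j$ in the Neumann lower bound via $\mu_1>0$.
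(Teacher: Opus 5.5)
Your proof is correct and is essentially the paper's argument: zero extension plus min--max comparison with cubes, lattice-point counting, and then a bounded ratio of consecutive eigenvalues to force an eigenvalue into every window of $q$ bands. For $k=0$ the index ``largest $j$ with $\lambda_j<2^k$'' does not exist, but that window contains $\lambda_1=1$ anyway. The one real deviation is your Neumann comparison on the outer cube. It is valid, but it requires the separate $j=1$ case and the observation that $j\ge2$ forces $\abs{m}^2\ge1$ in the count. The paper instead uses Dirichlet domain monotonicity on $Q_+$, which avoids both points because zero extension already lands in $H_0^1(Q_+)$.
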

\begin{proof}
Choose cubes $Q_-\subset\Omega\subset Q_+$. Zero extension into $Q_+$ gives compactness of $H_0^1(\Omega)\hookrightarrow L^2(\Omega)$ and a positive Poincar\'e constant, so the form operator has compact inverse. No boundary regularity is needed. Choose the cubes with positive side lengths. Domain monotonicity from the min--max principle bounds the Dirichlet eigenvalues of $\Omega$ between those of these cubes. Counting the integer lattice points in the explicit cube spectra gives~\eqref{eq:eigen-growth}. These standard spectral principles are treated in~\cite{davies1995}. For completeness, the resulting band-gap argument does not require a Weyl remainder: for $R\geq1$, choose the first $j$ for which $\lambda_j\geq R$. If $j>1$, then $c_\Omega(j-1)^{2/n}\leq\lambda_{j-1}<R$, so~\eqref{eq:eigen-growth} gives $\lambda_j\leq C'_\Omega R$. Increasing $C'_\Omega$ handles $j=1$. Choose $q$ with $2^q>C'_\Omega$. The interval $[2^k,2^{k+q})$ then contains an eigenvalue, which proves~\eqref{eq:gaps}.
\end{proof}

\section{Proof of the spectral-scale characterization}\label{sec:proof-spectral}
Write
\[
 B_a(s)=(I-E(a))E(s),\qquad s\geq0.
\]
All the operators in this expression commute with $w(A)^{-1}$ and with the band projections.

\subsection{Duality and smooth test functions}
For $\norm g_{L^\infty(H_w)}\leq1$ and $\norm v_H=1$, self-adjointness gives
\[
 \abs{\ip{\calK_a g}{v}}
 \leq\int_0^\infty\norm{w(A)^{-1}B_a(s)v}_H\dd s.
\]
Conversely, put $z_v(s)=w(A)^{-1}B_a(s)v$. If $v\ne0$ and $a>0$, then $z_v(s)\ne0$ at every finite $s\geq0$: each spectral multiplier is strictly positive. The measurable function
\begin{equation}\label{eq:dual-input}
 g_v(s)=w(A)^{-1}\frac{z_v(s)}{\norm{z_v(s)}_H}
\end{equation}
has $H_w$-norm one and satisfies
\[
 \ip{\calK_a g_v}{v}=\int_0^\infty\norm{z_v(s)}_H\dd s.
\]
Taking the supremum over $v$ proves~\eqref{eq:exact-dual}.

The functional $J_a(v)=\int_0^\infty\norm{w(A)^{-1}B_a(s)v}\dd s$ is Lipschitz on $H$, since
\[
 |J_a(v)-J_a(\widetilde v)|
 \leq\norm{v-\widetilde v}\int_0^\infty e^{-s}\dd s
 =\norm{v-\widetilde v}.
\]
Finite spectral vectors are dense, so its supremum on the unit sphere can be approximated by such vectors. For finite spectral $v$, the function in~\eqref{eq:dual-input} is smooth at every finite time. Multiply it by smooth scalar cutoffs with compact support in $(0,\infty)$ and values in $[0,1]$, tending pointwise to one. Dominated convergence preserves $J_a(v)$ in the limit. This proves the assertion about smooth, compactly supported, finite-spectral inputs.

\subsection{The upper estimate}
Set
\[
 b_k=\frac{\min\{a,2^{-k}\}}{w_k}.
\]
For $\lambda\in[2^k,2^{k+1})$,
\[
 \frac{(1-e^{-a\lambda})e^{-s\lambda}}{w(\lambda)}
 \leq\frac{\min\{1,a2^{k+1}\}e^{-s2^k}}{w_k}.
\]
Integration therefore gives
\begin{equation}\label{eq:band-upper}
 \int_0^\infty\norm{w(A)^{-1}B_a(s)P_kv}\dd s
 \leq 2b_k\norm{P_kv}.
\end{equation}
The triangle inequality and Cauchy--Schwarz across the orthogonal bands imply
\[
 J_a(v)\leq 2\sum_{k\in\calJ(A)}b_k\norm{P_kv}
 \leq2\rhoAw(a)\norm v.
\]
The argument first applies to finite spectral sums and then to all $v$ by continuity. This proves the upper half of~\eqref{eq:spectral-norm}.

\subsection{The matching lower estimate}
Take a finite subset of $\calJ(A)$. Retain either its even or its odd indices, choosing a set $F$ that carries at least half the sum of $b_k^2$. Choose one unit eigenvector $f_k$ from each retained band, with eigenvalue $\eta_k\in[2^k,2^{k+1})$, and set
\begin{equation}\label{eq:lower-v}
 Z_F=\left(\sum_{k\in F}b_k^2\right)^{1/2},
 \qquad v_F=\frac{1}{Z_F}\sum_{k\in F}b_k f_k.
\end{equation}
The intervals $J_k=[\eta_k^{-1},2\eta_k^{-1}]$ are pairwise disjoint up to endpoints. Indeed, successive retained bands differ by at least two, so their selected eigenvalues differ by a factor greater than two. On $J_k$, retain just the $f_k$ coordinate in the norm. It follows that
\begin{align}
 \int_{J_k}\norm{w(A)^{-1}B_a(s)v_F}\dd s
 &\geq\frac{b_k}{Z_F}
     \frac{(1-e^{-a\eta_k})(e^{-1}-e^{-2})}{\eta_k w(\eta_k)}
       \notag\\
 &\geq\frac{c(D_w)b_k^2}{Z_F}.\label{eq:band-lower}
\end{align}
For the last inequality use $1-e^{-x}\geq(1-e^{-1})\min\{x,1\}$, $\eta_k<2^{k+1}$, and $w(\eta_k)\leq D_w w_k$. Summing over the disjoint intervals yields $J_a(v_F)\geq c(D_w)Z_F$. The parity selection loses at most a factor $\sqrt2$. Exhausting $\calJ(A)$ by finite sets proves the lower bound and completes the proof of Theorem~\ref{thm:spectral}.

\begin{corollary}[The weighted endpoint]\label{cor:endpoint}
For every admissible weight,
\begin{equation}\label{eq:endpoint-criterion}
 \sup_{0<a\leq1}\frac{\kappa_{A,w}(a)}{a}
 \asymp\left(\sum_{k\in\calJ(A)}w_k^{-2}\right)^{1/2},
\end{equation}
where the value $+\infty$ is allowed. In particular, for $w=1$ a finite-dimensional operator has an endpoint constant comparable to the square root of its number of occupied bands.
\end{corollary}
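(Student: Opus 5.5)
By Theorem~\ref{thm:spectral}, $\kappa_{A,w}(a)\asymp\rhoAw(a)$ with constants depending only on $D_w$. It therefore suffices to prove
\[
 \sup_{0<a\leq1}\frac{\rhoAw(a)}{a}
 =\left(\sum_{k\in\calJ(A)}w_k^{-2}\right)^{1/2},
\]
with equality and the value $+\infty$ allowed. The constants $c_w,C_w$ then transfer the identity to the stated comparability. No further operator theory is needed; the argument is a monotone-convergence computation on the scalar profile~\eqref{eq:rho}.

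Fix $a\in(0,1]$. Dividing term by term gives
\[
 \frac{\rhoAw(a)^2}{a^2}=\sum_{k\in\calJ(A)}\frac{\min\{1,(2^ka)^{-1}\}^2}{w_k^2}.
\]
Each summand is at most $w_k^{-2}$, which gives the upper bound $\rhoAw(a)/a\leq(\sum_{k}w_k^{-2})^{1/2}$ for every $a$.

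For the lower bound, observe that as $a\downarrow0$ each summand increases monotonically to $w_k^{-2}$. This is the termwise content of the statement in Section~\ref{sec:spectral} that $\rhoAw(a)/a$ is nonincreasing. Monotone convergence, applied to the counting measure on $\calJ(A)$, then gives
\[
 \lim_{a\downarrow0}\frac{\rhoAw(a)^2}{a^2}=\sum_{k\in\calJ(A)}w_k^{-2}.
\]
The limit is valid whether the sum is finite or infinite. Hence the supremum over $(0,1]$ equals the full sum, and it equals $+\infty$ precisely when the sum diverges. An explicit alternative: for any finite $F\subset\calJ(A)$, taking $a\leq2^{-\max F}$ makes every $k\in F$ contribute exactly $w_k^{-2}$.

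For the special case $w=1$ in finite dimension, $\calJ(A)$ is finite and $w_k=1$ for all $k$. The right-hand side is then $(\#\calJ(A))^{1/2}$, the square root of the number of occupied bands.

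The only point requiring care is the convention when the supremum is infinite. In that case the left side of~\eqref{eq:endpoint-criterion} is $+\infty$ because $\kappa_{A,w}\geq c_w\rhoAw$, so "$\asymp$" holds in the extended sense. When the sum is finite, both bounds hold with the constants $c_w,C_w$ of Theorem~\ref{thm:spectral}, which depend only on $D_w$.
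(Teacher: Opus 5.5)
Your proposal is correct and follows the paper's proof: you reduce to the scalar profile via Theorem~\ref{thm:spectral}, then note that after division by $a^2$ each summand of~\eqref{eq:rho} increases to $w_k^{-2}$ as $a\downarrow0$, and apply monotone convergence. Your additional remark that $\sup_{0<a\leq1}\rhoAw(a)/a$ equals the sum exactly, not just up to constants, is a harmless sharpening of the same argument.
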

\begin{proof}
After division by $a^2$, every summand in~\eqref{eq:rho} increases to $w_k^{-2}$ as $a\downarrow0$. Monotone convergence and Theorem~\ref{thm:spectral} prove~\eqref{eq:endpoint-criterion}.
\end{proof}

\begin{remark}[A square-function explanation of the logarithm]\label{rem:square}
The unweighted continuum upper bound also has a direct Hilbert-space proof, in the square-function tradition of semigroup Littlewood--Paley theory~\cite{stein1970}. The spectral theorem gives
\begin{align*}
 \int_0^\infty s\norm{B_a(s)v}^2\dd s
 &=\frac14\sum_j\frac{(1-e^{-a\lambda_j})^2}{\lambda_j^2}
                   |\ip v{e_j}|^2
 \leq\frac{a^2}{4}\norm v^2.
\end{align*}
For $0<a<1$, weighted Cauchy--Schwarz on $(a,1)$ therefore bounds $\int_a^1\norm{B_a(s)v}\dd s$ by $(a/2)\sqrt{\log(1/a)}\norm v$. The intervals $(0,a)$ and $(1,\infty)$ contribute $O(a)\norm v$, using contraction on the first interval and $1-e^{-a\lambda}\leq a\lambda$ with exponential decay on the second. The dyadic formula refines this argument by retaining the actual occupied scales and the input weight. The qualitative unweighted divergence is consistent with the maximal-regularity obstruction discussed in~\cite{eberhardt1992,jacob2022,preussler2026}; the formula specifies its rate for the present family of perturbations.
\end{remark}

\section{Delay regularization and signed-measure perturbations}\label{sec:measure}
Let $I=[-r,0]$, $r>0$, and write $\calM(I)$ for the finite signed Borel measures. The total variation is $\norm\mu_{\TV}=|\mu|(I)$. We use
\begin{equation}\label{eq:BL}
 d_{\BL}(\mu,\nu)=
 \sup_{\norm q_\infty+\Lip(q)\leq1}
      \abs{\int_I q\dd(\mu-\nu)},
\end{equation}
where the tests are real-valued. This convention, using the sum rather than the maximum of the two test norms, determines the explicit atom distance in Lemma~\ref{lem:dirac}.

On each total-variation-bounded set, $d_{\BL}$ metrizes weak-star convergence. To verify this directly, the scalar test ball in~\eqref{eq:BL} is compact in $C(I)$ by Arzel\`a--Ascoli. A finite-net argument and the common variation bound turn pointwise weak-star convergence into uniform convergence over that ball. Conversely, piecewise affine functions are uniformly dense in $C(I)$, and the common variation bound controls the approximation error. No positivity assumption is used.

For $g\in C([-r,T];H_w)$, define
\begin{equation}\label{eq:G}
 G_{t,g}(\theta)=\int_0^t E(t-s)g(s+\theta)\dd s,
 \qquad 0\leq t\leq T,\quad\theta\in I.
\end{equation}

\begin{lemma}[Regularization in the delay coordinate]\label{lem:G}
There is $C=C(D_w)$, independent of $T$ and of spectral truncation, such that
\begin{equation}\label{eq:G-sup}
 \norm{G_{t,g}}_{C(I;H)}\leq\norm g_{C([-r,T];H_w)}
\end{equation}
and
\begin{equation}\label{eq:G-mod}
 \norm{G_{t,g}(\eta)-G_{t,g}(\theta)}_H
 \leq C\norm g_{C([-r,T];H_w)}\rhoAw(|\eta-\theta|).
\end{equation}
\end{lemma}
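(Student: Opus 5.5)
The sup bound \eqref{eq:G-sup} is immediate. Since $w\geq1$, we have $\norm{x}_H\leq\norm{x}_{H_w}$. Together with $\norm{E(t-s)}\leq e^{-(t-s)}$ this gives $\norm{G_{t,g}(\theta)}_H\leq\norm g_{C([-r,T];H_w)}\int_0^te^{-(t-s)}\dd s\leq\norm g$.

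For the modulus \eqref{eq:G-mod}, take $\theta<\eta$ and put $a=\eta-\theta$. The plan is to change variables $\sigma=s+\eta$ and $\sigma=s+\theta$, so that both integrals are over the same function $g(\sigma)$ with shifted semigroup arguments:
\[
 G_{t,g}(\eta)=\int_{\eta}^{t+\eta}E(t+\eta-\sigma)g(\sigma)\dd\sigma,\qquad
 G_{t,g}(\theta)=\int_{\theta}^{t+\theta}E(t+\theta-\sigma)g(\sigma)\dd\sigma .
\]
Substituting $\tau=t+\theta-\sigma$ and writing $\tilde g(\tau)=g(t+\theta-\tau)$, the difference splits into three pieces:
\[
 G_{t,g}(\eta)-G_{t,g}(\theta)
 =-\int_{0}^{t}(I-E(a))E(\tau)\tilde g(\tau)\mathbf 1\dd\tau
 +\int_{-a}^{0}E(\tau+a)\tilde g(\tau)\dd\tau
 -\int_{t-a}^{t}E(\tau+a)\tilde g(\tau)\dd\tau .
\]
This decomposition needs care with the ranges: the overlap is $\tau\in[0,t-a]$ when $a\leq t$, and the case $a>t$ needs a separate count, as discussed below. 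In each piece $\tilde g$ is evaluated at points of $[-r,T]$, since $\theta,\eta\in I$.

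The main term is $\calK_a$ applied to $\tilde g$ truncated by an indicator. Its $L^\infty(0,\infty;H_w)$ norm is at most $\norm g$, so Theorem~\ref{thm:spectral} bounds it by $C_w\rhoAw(a)\norm g$. The cutoff at $t-a$ rather than $t$ costs only a boundary piece of the same type as the other two.

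Each boundary piece is an integral over an interval of length at most $a$ of a contraction applied to $\tilde g$. It is therefore at most $a\norm g$, and $a\leq\rhoAw(a)$ when $a\leq1$ by \eqref{eq:rho-properties}. For $a>1$ I use $\rhoAw(a)\geq\rhoAw(1)\geq1$ together with the monotonicity of $\rhoAw$. I would not rely on a crude length bound here. Instead I would fall back on \eqref{eq:G-sup}, which gives $\norm{G(\eta)-G(\theta)}\leq2\norm g\leq2\rhoAw(a)\norm g$. The cases $a\geq1$ and $a>t$ are thus handled by this bound, and whenever $a\leq\min\{1,t\}$ the three-piece decomposition applies.

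The constant $C$ then depends only on $C_w$, that is, only on $D_w$. It is independent of $T$ and of spectral truncation, because Theorem~\ref{thm:spectral} is uniform in the spectrum: a spectral truncation is again an operator of the same class, with a subset of the occupied bands.

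The main obstacle is the bookkeeping of the integration ranges after the shift, especially the case $a>t$. That case has no overlap region, so each $G$ is itself an integral over length $t<a$ and is bounded by $t\norm g\leq a\norm g$, which suffices. The remaining subtlety is that the inputs to $\calK_a$ must be measurable and $H_w$-valued, which holds because $g\in C([-r,T];H_w)$.
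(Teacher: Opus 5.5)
Your proposal is correct and follows essentially the same route as the paper. Both change variables to a common integrand, bound the overlap term by Theorem~\ref{thm:spectral} after zero extension, and absorb the endpoint pieces of length at most $a$ via $\rhoAw(a)\geq a$ for $a\leq1$. Both use the length bound $2t\norm g\leq2a\norm g$ when $t<a\leq1$, and \eqref{eq:G-sup} together with $\rhoAw(a)\geq1$ for $a>1$.

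Incidentally, if you read your indicator as $\mathbf 1_{[0,t]}$, your displayed three-term identity is exact for every $a>0$, so the separate case $a>t$ is not actually needed.
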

\begin{proof}
The first estimate follows from $\norm{E(s)}\leq e^{-s}$ and $\norm{x}_H\leq\norm{x}_{H_w}$. Assume $\eta=\theta+a$ with $a>0$. Changing variables gives
\[
 G_{t,g}(\theta)=\int_\theta^{t+\theta}E(t+\theta-q)g(q)\dd q.
\]
If $a<t$, the two nonoverlapping endpoint intervals contribute at most $2a\norm g_\infty$. On the overlap $q\in[\theta+a,t+\theta]$, their difference is
\[
 -\int_0^{t-a}(I-E(a))E(s)g(t+\theta-s)\dd s.
\]
Extending this input by zero outside $[0,t-a]$, Theorem~\ref{thm:spectral} bounds its norm by $C\rhoAw(a)\norm g_\infty$. When $0<a\leq1$, the endpoint terms are absorbed using $\rhoAw(a)\geq a$. If $t\leq a\leq1$, the separate integral lengths instead give $2t\norm g_\infty\leq2a\norm g_\infty$. For $a>1$, use~\eqref{eq:G-sup} and $\rhoAw(a)\geq1$. These cases prove~\eqref{eq:G-mod}.
\end{proof}

\begin{theorem}[Integrated perturbation of signed measures]\label{thm:measure}
For $\norm\mu_{\TV},\norm\nu_{\TV}\leq M$,
\begin{equation}\label{eq:measure-bound}
 \sup_{0\leq t\leq T}
 \norm{\int_I G_{t,g}(\theta)\dd(\mu-\nu)(\theta)}_H
 \leq C\norm g_{C([-r,T];H_w)}\rhoAw(d_{\BL}(\mu,\nu)),
\end{equation}
where $C$ depends only on $M,r,D_w$. In particular, it is independent of the time horizon and of spatial resolution.
\end{theorem}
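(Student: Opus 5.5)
The plan is to reduce \eqref{eq:measure-bound} to a scalar statement and then combine Lemma~\ref{lem:G} with a Lipschitz smoothing at the scale $\delta=d_{\BL}(\mu,\nu)$. Fix $t\in[0,T]$ and write $G=G_{t,g}$ and $N=\norm g_{C([-r,T];H_w)}$. Lemma~\ref{lem:G} shows that $G$ is continuous from $I$ to $H$, so the Bochner integral $\int_I G\dd(\mu-\nu)$ exists. Its norm is the supremum over unit vectors $v\in H$ of $\abs{\int_I q_v\dd(\mu-\nu)}$, where $q_v(\theta)=\ip{G(\theta)}{v}$. By Lemma~\ref{lem:G}, every such $q_v$ is a real continuous function on $I$ with
\[
 \norm{q_v}_\infty\leq N,\qquad \abs{q_v(\eta)-q_v(\theta)}\leq CN\rhoAw(\abs{\eta-\theta}),
\]
with $C=C(D_w)$. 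No dependence on $t$, $T$ or spatial resolution enters after this step. It therefore suffices to prove, for any scalar $q$ with these two bounds, that $\abs{\int_I q\dd(\mu-\nu)}\leq C(M,r,D_w)N\rhoAw(d)$ with $d=d_{\BL}(\mu,\nu)$. The case $d=0$ gives $\mu=\nu$, since bounded Lipschitz functions separate measures, so assume $d>0$.

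\textbf{Smoothing step.} Let $\delta\in(0,r]$. Let $q_\delta$ be the piecewise affine interpolant of $q$ on a uniform grid of $I$ with mesh $h\in[\delta/2,\delta]$. Then:
\begin{itemize}
 \item $\norm{q_\delta}_\infty\leq N$;
 \item on each cell the slope is at most $CN\rhoAw(h)/h\leq 2CN\rhoAw(\delta)/\delta$, using that $\rhoAw$ is increasing and $\rhoAw(a)/a$ is nonincreasing;
 \item on each cell, $\norm{q-q_\delta}_\infty\leq 2CN\rhoAw(\delta)$, by comparing both functions with $q$ at a node.
\end{itemize}
The function $q_\delta/(N+2CN\rhoAw(\delta)/\delta)$ is an admissible test in \eqref{eq:BL}. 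Splitting $q=(q-q_\delta)+q_\delta$ and using $\norm{\mu-\nu}_{\TV}\leq 2M$ gives
\[
 \abs{\int_I q\dd(\mu-\nu)}\leq 4CMN\rhoAw(\delta)+N\Bigl(1+2C\frac{\rhoAw(\delta)}{\delta}\Bigr)d .
\]

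\textbf{Choice of scale.} If $d\leq r$, take $\delta=d$. The right side becomes $N\bigl((4CM+2C)\rhoAw(d)+d\bigr)$, and $d\leq\max\{1,r\}\rhoAw(d)$ follows from $\rhoAw(a)\geq a$ for $a\leq1$ together with \eqref{eq:rho-scale}. If $d>r$, the trivial bound $2MN$ suffices. Since $d>r$, monotonicity and \eqref{eq:rho-scale} give $\rhoAw(d)\geq\rhoAw(r)\geq\min\{1,r\}\rhoAw(1)\geq\min\{1,r\}$, which absorbs the constant $2M$. Finally, take the supremum over $v$ and over $t\in[0,T]$; the constants depend only on $M$, $r$ and $D_w$.

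\textbf{Main obstacle.} The only delicate point is that the smoothing must respect the possibly rough modulus $\rhoAw$ without losing a logarithm. The piecewise affine interpolant needs only the two monotonicity properties of $\rhoAw$ and $\rhoAw(\cdot)/\cdot$ stated after \eqref{eq:rho}, so I expect this step to go through cleanly. A naive choice of $\delta$, or a cruder Lipschitz bound, would instead give $\rhoAw(d)\log$-type losses.
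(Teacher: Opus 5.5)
Your proposal is correct and follows essentially the same route as the paper: piecewise affine interpolation on a uniform grid of mesh in $[d/2,d]$, interpolant and remainder bounds from Lemma~\ref{lem:G} together with the monotonicity of $\rhoAw$ and $\rhoAw(a)/a$, bounded-Lipschitz duality after testing against unit vectors, and the trivial $2M\norm g_\infty$ bound for large $d$. The differences are cosmetic. You scalarize before interpolating, which is equivalent because interpolation commutes with $\ip{\cdot}{v}$. You also split at $d\leq r$ rather than $d\leq\min\{1,r\}$, covering $1<d\leq r$ via \eqref{eq:rho-scale}.
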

\begin{proof}
Write $d=d_{\BL}(\mu,\nu)$ and $d_0=\min\{1,r\}$. The assertion is immediate when $d=0$. Suppose $0<d\leq d_0$, and partition $I$ into $n=\lceil r/d\rceil$ equal intervals of length $\ell=r/n\in[d/2,d]$. Let $G_\ell$ be the piecewise affine interpolant of $G=G_{t,g}$. Lemma~\ref{lem:G} implies
\begin{align*}
 \norm{G-G_\ell}_\infty&\leq C\norm g_\infty\rhoAw(d),\\
 \norm{G_\ell}_\infty&\leq\norm g_\infty,\\
 \Lip_H(G_\ell)&\leq C\norm g_\infty\frac{\rhoAw(d)}{d}.
\end{align*}
Testing against unit vectors in $H$ and using~\eqref{eq:BL} yields
\[
 \norm{\int_I G_\ell\dd(\mu-\nu)}_H
 \leq d\bigl(\norm{G_\ell}_\infty+\Lip_H(G_\ell)\bigr).
\]
The interpolation remainder contributes at most $2M\norm{G-G_\ell}_\infty$. Since $d\leq\rhoAw(d)$ for $d\leq1$, these estimates prove~\eqref{eq:measure-bound} for $d\leq d_0$. For $d>d_0$, the direct bound is $2M\norm g_\infty$, whereas $\rhoAw(d)\geq\rhoAw(d_0)\geq d_0$. Increasing the constant proves the remaining case.
\end{proof}

\begin{remark}[The instantaneous map is not uniformly regularized]\label{rem:instantaneous}
Lemma~\ref{lem:G} concerns the time-integrated response, not the raw memory functional on an arbitrary bounded subset of $C(I;H)$. In particular, one cannot apply bounded-Lipschitz duality directly to a merely bounded continuous history and assume a uniform Lipschitz test norm. The semigroup integration is precisely what supplies the modulus used in Theorem~\ref{thm:measure}.
\end{remark}

\section{Exact realization in the linear delay equation}\label{sec:lower}
Set $X=C(I;H)$, $X_w=C(I;H_w)$, and $Y_T=C([-r,T];H)$. The linear reference equation is
\begin{equation}\label{eq:linear-model}
 \partial_tu+Au=\int_Iu(t+\theta)\dd\mu(\theta),\qquad u|_I=\phi.
\end{equation}
Write $U_\mu(t)\phi=u_\mu^\phi(t)$ for its current-state map. We first construct the solution and its weighted stability directly, so the sharp linear results do not depend on the later semilinear extension.

\begin{proposition}[Linear solution and weighted upper bound]\label{prop:linear}
For every $\mu\in\mathcal M(I)$ and $\phi\in X_w$, equation~\eqref{eq:linear-model} has a unique global mild solution with values in $H_w$. If $\norm\mu_{\TV}\leq M$, then
\begin{equation}\label{eq:linear-weighted}
 \norm{u_\mu^\phi}_{C([-r,T];H_w)}\leq e^{MT}\norm\phi_{X_w}.
\end{equation}
For $\norm\mu_{\TV},\norm\nu_{\TV}\leq M$ and $\norm\phi_{X_w},\norm\psi_{X_w}\leq R_w$,
\begin{equation}\label{eq:linear-stability}
 \norm{u_\mu^\phi-u_\nu^\psi}_{Y_T}
 \leq C\bigl(\norm{\phi-\psi}_{X}+R_w\rhoAw(d_{\BL}(\mu,\nu))\bigr),
\end{equation}
where $C$ depends on $M,T,r,D_w$ and is independent of spectral truncation.
\end{proposition}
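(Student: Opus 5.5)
We will construct the solution as a fixed point of the mild formulation
\[
 u(t)=E(t)\phi(0)+\int_0^tE(t-s)\int_Iu(s+\theta)\dd\mu(\theta)\dd s,\qquad t\geq0,
\]
with $u=\phi$ on $I$. The map $\Phi$ defined by the right side acts on the closed affine subspace $\{u\in C([-r,T];H_w):u|_I=\phi\}$. Since $E(t)$ commutes with $w(A)$ and is a contraction on $H_w$, the memory term satisfies
\[
 \norm{\int_Iu(s+\theta)\dd\mu(\theta)}_{H_w}\leq M\sup_{[-r,s]}\norm{u}_{H_w}.
\]
Continuity in $t$ of the Bochner integral follows from strong continuity of $E$ and continuity of $s\mapsto\int_Iu(s+\theta)\dd\mu$, which holds by dominated convergence. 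In the norm $\sup_t e^{-2Mt}\norm{u(t)}_{H_w}$, $\Phi$ is a contraction with constant $1/2$. This gives existence and uniqueness on each $[0,T]$, and consistency across horizons gives the global solution. The same argument in $H$ yields uniqueness in $Y_T$.

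For \eqref{eq:linear-weighted}, put $m(t)=\sup_{[-r,t]}\norm{u}_{H_w}$. The mild formula gives $m(t)\leq\norm\phi_{X_w}+M\int_0^tm(s)\dd s$, and Gronwall yields $e^{Mt}\norm\phi_{X_w}$. None of these constants see the spectrum, so they are truncation independent.

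For \eqref{eq:linear-stability}, we split
\[
 u_\mu^\phi-u_\nu^\psi=(u_\mu^\phi-u_\mu^\psi)+(u_\mu^\psi-u_\nu^\psi).
\]
The first difference solves the linear equation with history $\phi-\psi$. Its $Y_T$ bound $e^{MT}\norm{\phi-\psi}_X$ follows from the $w=1$ case of the previous estimate; only $H$-norms are used here, so no weighted control of $\phi-\psi$ is required.

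For the second difference $z=u_\mu^\psi-u_\nu^\psi$, which vanishes on $I$, we write
\[
 z(t)=\int_0^tE(t-s)\int_Iz(s+\theta)\dd\mu\dd s+\int_0^tE(t-s)\int_Iu_\nu^\psi(s+\theta)\dd(\mu-\nu)(\theta)\dd s.
\]
The key step is to recognise the last term, after Fubini, as $\int_IG_{t,g}(\theta)\dd(\mu-\nu)(\theta)$ from \eqref{eq:G} with $g=u_\nu^\psi\in C([-r,T];H_w)$. Fubini is justified because the integrand is bounded and jointly measurable. Theorem~\ref{thm:measure} then bounds this term by $C\norm g_{C([-r,T];H_w)}\rhoAw(d)$, and \eqref{eq:linear-weighted} gives $\norm g\leq e^{MT}R_w$. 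Gronwall applied to $\sup_{[0,t]}\norm z_H$ closes the estimate with $C=C(M,T,r,D_w)$.

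The main obstacle is the point flagged in Remark~\ref{rem:instantaneous}. One must not estimate the delay-law difference pointwise in $s$, since that would need a Lipschitz modulus of the history in $\theta$. The time integration must be kept inside, by pairing the measure difference with the integrated object $G_{t,g}$; this is why the forcing term is written against the fixed solution $u_\nu^\psi$ and not against $z$.

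A second subtlety is that Theorem~\ref{thm:measure} requires $g$ to be continuous with values in $H_w$ on all of $[-r,T]$. This is supplied by the first step, which constructed the solution in $H_w$ rather than only in $H$.
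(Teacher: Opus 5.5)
Your proposal is correct and follows essentially the same route as the paper: a contraction-mapping construction in $C([-r,T];H_w)$, Gronwall on the running supremum for \eqref{eq:linear-weighted}, and, for \eqref{eq:linear-stability}, a split of the delay difference into a trajectory part and a measure part along $u_\nu^\psi$, identified via Fubini with the expression in Theorem~\ref{thm:measure} and closed by Gronwall. The differences are only cosmetic: you use an exponentially weighted (Bielecki-type) norm to obtain a contraction on all of $[0,T]$ rather than short-time contraction plus continuation, and you pass through the intermediate solution $u_\mu^\psi$ rather than subtracting the two mild formulas directly.
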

\begin{proof}
In $C([-r,t_0];H_w)$, with history fixed, the map
\[
 u(t)\longmapsto E(t)\phi(0)+\int_0^t E(t-s)
             \int_Iu(s+\theta)\dd\mu(\theta)\dd s
\]
has Lipschitz constant at most $Mt_0$, because $E(t)$ is a contraction on $H_w$. It is therefore a contraction for $Mt_0<1$; the case $M=0$ is immediate. This construction includes an atom at zero. Taking the running supremum of the weighted norm in the mild formula gives
\[
 Z_w(t)\leq\norm\phi_{X_w}+M\int_0^t Z_w(s)\dd s.
\]
Gronwall proves~\eqref{eq:linear-weighted} and permits continuation on every finite interval.

For $q=u_\mu^\phi-u_\nu^\psi$, subtract the two mild formulas and split the delay difference into its trajectory part and its measure part evaluated along $u_\nu^\psi$. By Fubini, the integrated measure part is the expression in Theorem~\ref{thm:measure} with $g=u_\nu^\psi$. Its supremum norm is at most $CR_we^{MT}\rhoAw(d_{\BL}(\mu,\nu))$. The trajectory part is bounded by $M\int_0^t\sup_{-r\leq s\leq\sigma}\norm{q(s)}\dd\sigma$. A second application of Gronwall proves~\eqref{eq:linear-stability}. All estimates use contractions and Theorem~\ref{thm:measure}, whose constants are independent of the dimension.
\end{proof}

\subsection{The exact history construction}
The lower construction is performed before newly generated states enter the delay. Fix
\begin{equation}\label{eq:time-choice}
 0<T<\tau<r,\qquad
 0<a\leq a_0:=\min\{1,T/4,(r-\tau)/2\}.
\end{equation}
For a common history $\phi$, consider
\begin{equation}\label{eq:lower-model}
 \partial_t u+Au=u(t-\tau),\qquad
 \partial_t v+Av=v(t-\tau-a),\qquad
 u|_I=v|_I=\phi.
\end{equation}
Both kernels are positive probability measures. Define
\begin{equation}\label{eq:D-response}
 \mathfrak D_{A,w}(a;T,\tau)
 =\sup_{\norm\phi_{X_w}\leq1}\norm{u(T)-v(T)}_H.
\end{equation}

\begin{theorem}[Exact realization of the spectral modulus]\label{thm:lower}
Under~\eqref{eq:time-choice},
\begin{equation}\label{eq:lower-response}
 c\rhoAw(a)\leq\mathfrak D_{A,w}(a;T,\tau)
             \leq C\rhoAw(a).
\end{equation}
The constants depend on $T,r,D_w$, but are independent of the dimension and the spectrum. In the lower bound the histories may be chosen smooth in time and supported on finitely many eigenvectors, with $\phi(0)=0$.
\end{theorem}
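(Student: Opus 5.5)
The plan is to exploit that, under~\eqref{eq:time-choice}, neither equation in~\eqref{eq:lower-model} ever feeds back its own newly generated states before time $T$. For $t\in[0,T]$ we have $t-\tau\in[-\tau,T-\tau]\subset[-r,0)$ and $t-\tau-a\in[-r,0)$, because $\tau+a<r$ and $T<\tau$. Hence both mild formulas are explicit in $\phi$. Substituting $\sigma=T-s$ gives
\[
 u(T)-v(T)=\int_0^T E(\sigma)\bigl[\phi(T-\tau-\sigma)-\phi(T-\tau-a-\sigma)\bigr]\dd\sigma .
\]
This identity drives both bounds.

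\textbf{Upper bound.} Extend $\phi$ to $g\in C([-r,T];H_w)$ by the constant value $\phi(0)$ on $[0,T]$. This leaves the sup norm unchanged, and only history values are ever evaluated. The difference above is then exactly $G_{T,g}(-\tau)-G_{T,g}(-\tau-a)$ in the notation of~\eqref{eq:G}. Lemma~\ref{lem:G} therefore gives $\norm{u(T)-v(T)}_H\leq C\norm\phi_{X_w}\rhoAw(a)$, with $C=C(D_w)$. Alternatively, one can quote~\eqref{eq:linear-stability}, since $d_{\BL}(\delta_{-\tau},\delta_{-\tau-a})\leq a$.

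\textbf{Exact realization.} Let $h=h_a$ be smooth, finite-spectral, and compactly supported in $(0,T/2)$. Put $\psi(\sigma)=-h(\sigma-a)$, extended by zero, and define the history through $\phi(T-\tau-\sigma)=\psi(\sigma)$, with $\phi=0$ elsewhere on $I$. Then
\[
 \psi(\sigma)-\psi(\sigma+a)=h(\sigma)-h(\sigma-a).
\]
Substituting into the difference formula and shifting the second term by $a$ yields
\[
 \int_0^\infty E(\sigma)h(\sigma)\dd\sigma-\int_0^\infty E(s+a)h(s)\dd s
 =\int_0^{T/2}(I-E(a))E(s)h(s)\dd s,
\]
which is~\eqref{eq:intro-realization} with no remainders.

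The support of $\psi$ on $[0,T+a]$ lies in $(a,T/2+2a)$. Since $a\leq T/4$, this set lies inside $(0,T]$, so both evaluations stay in the window where the formula is valid. In the history variable, these points lie in $[T-\tau-T,\,T-\tau)\subset[-r,0)$, using $\tau<r$. Moreover $\phi(0)=0$ because $T-\tau<0$, and $\norm\phi_{X_w}=\sup_s\norm{h(s)}_{H_w}$. Hence $\mathfrak D_{A,w}(a;T,\tau)$ dominates the norm of $\calK_a$ restricted to inputs supported in $(0,T/2)$.

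\textbf{Main obstacle.} The remaining step is to show that the time-truncated operator norm is still $\gtrsim\rhoAw(a)$. Theorem~\ref{thm:spectral}'s lower proof uses the windows $J_k=[\eta_k^{-1},2\eta_k^{-1}]$, which may leave $(0,T/2)$ when $T<4$. I would redo that argument with the rescaled windows $J_k^c=[c\eta_k^{-1},2c\eta_k^{-1}]$, where $c=T/4$ (taking $c\leq1$, since $\eta_k\geq1$). These still lie in $(0,T/2)$. They remain pairwise disjoint under the even/odd parity selection, because the ratio argument is scale invariant. On $J_k^c$ the factor $e^{-s\eta_k}$ is at least $e^{-2c}$, so the analogue of~\eqref{eq:band-lower} holds with a constant depending on $T$ and $D_w$.

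The dual inputs $g_v$ from~\eqref{eq:dual-input} are then restricted to $\bigcup_k J_k^c$ and smoothed by cutoffs exactly as in the duality subsection. This gives finite-spectral, smooth, compactly supported $h$ with $\norm h_{L^\infty(H_w)}\leq1$ and
\[
 \norm{\calK_a h}_H\geq c(T,D_w)\rhoAw(a)
\]
up to an arbitrarily small loss. Combined with the exact realization, this proves the lower half of~\eqref{eq:lower-response} with histories of the asserted form. I expect verifying the window containment and disjointness uniformly in the spectrum to be the only delicate point.
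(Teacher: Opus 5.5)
Your proposal is correct, and it differs from the paper's proof only in how the low spectral bands are handled.

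\textbf{What is the same.} Your realization step is the paper's construction: your history $\phi(\theta)=\psi(T-\tau-\theta)=-h(T-\tau-a-\theta)$ is exactly the paper's $\phi_a$, and the exact identity follows in the same way. Your upper bound is the same appeal to Lemma~\ref{lem:G}.

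\textbf{What differs: fitting the lower bound into $(0,T/2)$.}

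\emph{The paper's route.} The paper keeps the unscaled windows $[\eta_k^{-1},2\eta_k^{-1}]$ and restricts to the high bands $k\geq k_T$, where $2^{k_T}\geq 8/T$. This lets it reuse the band estimate \eqref{eq:band-lower} and the constant $c(D_w)$ of Theorem~\ref{thm:spectral} unchanged. The at most $k_T\approx\log_2(8/T)$ low bands contribute at most $k_Ta^2$ to $\rhoAw(a)^2$. The paper handles them with a second, separate witness $\phi_0(\theta)=(\theta/r)e_1$, for which $u(T)-v(T)=a(1-e^{-T})r^{-1}e_1$ exactly. It then combines the two lower bounds, using that $\mathfrak D_{A,w}$ is a supremum.

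\emph{Your route.} You dilate the windows by $c=\min\{1,T/4\}$. This covers every occupied band, including $k=0$, with a single family of compactly supported pulse histories. The price is a factor $e^{-c}-e^{-2c}\geq e^{-1}(1-e^{-1})c$ in the band estimate. In exchange, your lower constant depends only on $T$ and $D_w$, not on $r$, and you need no second witness.

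\textbf{Two minor points.}
\begin{itemize}
\item When $T\leq 4$ and the band $k=0$ is retained with $\eta_0=\lambda_1=1$, the window $J_0^c=[T/4,T/2]$ touches $T/2$. No $\chi\in C_c^\infty((0,T/2))$ can equal one on it. Either take $c=T/8$, or note that the exact identity only needs $\supp h\subset(0,T-a)$, which leaves ample room since $a\leq T/4$.
\item The support of $\psi$ is $(a,T/2+a)$, not $(a,T/2+2a)$. The looser containment you used is harmless.
\end{itemize}
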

\begin{proof}
For $0\leq t\leq T$, both delayed arguments are negative. Therefore
\begin{equation}\label{eq:first-step}
 u(T)-v(T)=\int_0^T E(T-s)
             [\phi(s-\tau)-\phi(s-\tau-a)]\dd s.
\end{equation}
The upper bound follows from Lemma~\ref{lem:G}, applied to any bounded continuous extension of $\phi$ beyond time zero.

For the lower bound, choose $k_T\geq0$ so that $2^{k_T}\geq8/T$, and put
\[
 \rho_{\mathrm{hi}}(a)^2
 =\sum_{\substack{k\in\calJ(A)\\ k\geq k_T}}
       \frac{\min\{a,2^{-k}\}^2}{w_k^2}.
\]
Select a finite set of these high bands, retain a parity class, and construct $v_F$ as in~\eqref{eq:lower-v}. The intervals $J_k=[\eta_k^{-1},2\eta_k^{-1}]$ now lie in $(0,T/4]$. Choose $\chi\in C_c^\infty((0,T/2))$ with $0\leq\chi\leq1$ and $\chi=1$ on all the selected intervals. Define, with zero extension to the real line,
\begin{equation}\label{eq:h-lower}
 h_a(s)=\chi(s)w(A)^{-1}
 \frac{w(A)^{-1}B_a(s)v_F}{\norm{w(A)^{-1}B_a(s)v_F}}.
\end{equation}
This function is smooth, takes values in a finite spectral space, and has $H_w$-norm at most one. Set
\begin{equation}\label{eq:phi-lower}
 \phi_a(\theta)=-h_a(T-\tau-a-\theta),\qquad \theta\in I.
\end{equation}
Its support is contained in $(T/2-\tau-a,T-\tau-a)\subset(-r,0)$, so it is an admissible smooth history with $\phi_a(0)=0$.

Writing $q=T-s$ in~\eqref{eq:first-step}, the difference becomes
\[
 \int_0^T E(q)[h_a(q)-h_a(q-a)]\dd q.
\]
Because $\supp h_a\subset(0,T/2)$ and $a\leq T/4$, both terms contain the entire support after the change of variables. Consequently there is the exact identity
\begin{equation}\label{eq:exact-realization}
 u(T)-v(T)=\int_0^{T/2}(I-E(a))E(s)h_a(s)\dd s.
\end{equation}
There are no endpoint remainder terms. Taking the inner product with $v_F$ and repeating~\eqref{eq:band-lower} gives a lower bound by the spectral sum over the selected high bands. Approximation by finite sets and the parity argument show that $\mathfrak D_{A,w}\geq c(D_w)\rho_{\mathrm{hi}}(a)$.

It remains to control the finitely many low bands. Use the separate history $\phi_0(\theta)=(\theta/r)e_1$, whose $X_w$-norm is one. Then~\eqref{eq:first-step} gives
\begin{equation}\label{eq:low-mode}
 u(T)-v(T)=\frac{a(1-e^{-T})}{r}e_1.
\end{equation}
Since $w_k\geq1$, the omitted part of $\rhoAw(a)^2$ is at most $k_Ta^2$. The two lower bounds, one using~\eqref{eq:phi-lower} and one using $\phi_0$, imply the full lower estimate in~\eqref{eq:lower-response}. The histories need not be the same for these two estimates because~\eqref{eq:D-response} is a supremum. All constants are uniform under spectral truncation.
\end{proof}

\begin{lemma}[Distance between two atoms]\label{lem:dirac}
For distinct $\theta,\eta\in I$, with $a=|\theta-\eta|$,
\begin{equation}\label{eq:dirac-distance}
 d_{\BL}(\delta_\theta,\delta_\eta)=\frac{2a}{2+a}.
\end{equation}
\end{lemma}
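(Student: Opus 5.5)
The distance reduces to a scalar optimization problem. For a test $q$ with $\norm q_\infty+\Lip(q)\leq1$ we have $\int_I q\dd(\delta_\theta-\delta_\eta)=q(\theta)-q(\eta)$. So $d_{\BL}(\delta_\theta,\delta_\eta)$ is the supremum of $|q(\theta)-q(\eta)|$ over this test ball. I will prove a matching upper bound and construct an extremal test.

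\textbf{Upper bound.} Write $s=\norm q_\infty$ and $L=\Lip(q)$, with $s+L\leq1$. Then
\[
 |q(\theta)-q(\eta)|\leq\min\{2s,La\}\leq\min\{2s,(1-s)a\}.
\]
The first term increases in $s$ and the second decreases, so the maximum over $s\in[0,1]$ occurs where they are equal. That happens at $2s=(1-s)a$, that is, $s=a/(2+a)$, and the common value is $2a/(2+a)$. Hence $d_{\BL}\leq 2a/(2+a)$.

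\textbf{Lower bound.} Assume $\theta<\eta$ and put $s=a/(2+a)$, so that $L=1-s=2/(2+a)$. Define the clamped affine function
\[
 q(x)=\max\{-s,\min\{s,\,s-L(x-\theta)\}\}.
\]
Then $q(\theta)=s$. Also $s-La=s-2a/(2+a)=-s$, so $q(\eta)=-s$. The function has sup norm $s$ and Lipschitz constant $L$, so $s+L=1$ and $q$ is admissible. It is real-valued and defined on $I$, since we simply restrict it to $I$. This gives $q(\theta)-q(\eta)=2s=2a/(2+a)$, which matches the upper bound.

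There is no genuine obstacle. The only point needing care is the convention in~\eqref{eq:BL}: the constraint is on the sum $\norm q_\infty+\Lip(q)$, not the maximum of the two. That is exactly why the sup-norm budget and the Lipschitz budget trade off, and why the equalization step produces the value $2a/(2+a)$ rather than $\min\{a,2\}$.
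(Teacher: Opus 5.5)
Your proof is correct and follows essentially the same route as the paper. The paper gets the upper bound in one line by adding $\norm q_\infty\geq b/2$ and $\Lip(q)\geq b/a$ under the sum constraint; this is equivalent to your equalization of $\min\{2s,(1-s)a\}$. It also uses the same clamped affine extremal, with values $\pm a/(2+a)$.
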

\begin{proof}
If $b=|q(\theta)-q(\eta)|$, then $\norm q_\infty\geq b/2$ and $\Lip(q)\geq b/a$. Thus the constraint in~\eqref{eq:BL} gives $b\leq2a/(2+a)$. Equality is attained by an affine function between the two points with values $\pm a/(2+a)$, extended constantly outside that interval.
\end{proof}

Let $\calP(I)$ be the probability measures on $I$. For the linear equation with a general kernel, define the worst-case modulus
\begin{equation}\label{eq:Omega}
 \Omega_{A,w,T}(d)=
 \sup_{\substack{\mu,\nu\in\calP(I),\ d_{\BL}(\mu,\nu)\leq d\\
                  \norm\phi_{X_w}\leq1}}
       \norm{U_\mu(T)\phi-U_\nu(T)\phi}_H.
\end{equation}

\begin{corollary}[Sharp solution-map modulus]\label{cor:omega}
Fix $0<T<r/2$. For all sufficiently small $d>0$,
\begin{equation}\label{eq:omega-rho}
 \Omega_{A,w,T}(d)\asymp\rhoAw(d),
\end{equation}
with constants independent of spectral truncation. In particular, for the Dirichlet Laplacian and $w=1$,
\begin{equation}\label{eq:omega-sharp}
 \Omega_{A,1,T}(d)\asymp d\sqrt{\log(e/d)}.
\end{equation}
No $o(d\sqrt{\log(e/d)})$ modulus holds uniformly on this history ball.
\end{corollary}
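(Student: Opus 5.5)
The upper bound $\Omega_{A,w,T}(d)\leq C\rhoAw(d)$ comes from Proposition~\ref{prop:linear}. I apply \eqref{eq:linear-stability} with $\psi=\phi$, $M=1$ (probability measures), $R_w=1$ and horizon $T$. This gives $\norm{U_\mu(T)\phi-U_\nu(T)\phi}_H\leq C\rhoAw(d_{\BL}(\mu,\nu))$. Since $\rhoAw$ is increasing, this is at most $C\rhoAw(d)$ whenever $d_{\BL}(\mu,\nu)\leq d$. The constant depends only on $T,r,D_w$ and is uniform under spectral truncation.

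For the lower bound I use the two-atom family of Theorem~\ref{thm:lower}. The restriction $T<r/2$ lets me choose $\tau$ with $T<\tau<r$ and $r-\tau$ bounded below, say $\tau=(T+r)/2$. Then $a_0=\min\{1,T/4,(r-\tau)/2\}$ is a fixed positive number depending only on $T,r$. For $d$ small, define $a$ by $2a/(2+a)=d$, that is $a=2d/(2-d)$. By Lemma~\ref{lem:dirac}, $d_{\BL}(\delta_{-\tau},\delta_{-\tau-a})=d$. Requiring $a\leq a_0$ and $\tau+a\leq r$ is what "sufficiently small $d$" means. Both Dirac masses are probability measures on $I$, so
\[
 \Omega_{A,w,T}(d)\geq\mathfrak D_{A,w}(a;T,\tau)\geq c\rhoAw(a)\geq c\rhoAw(d),
\]
using $a\geq d$ and monotonicity of $\rhoAw$. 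One can also use \eqref{eq:rho-scale} with $a\in[d,2d]$. The constants again depend only on $T,r,D_w$.

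For the Dirichlet Laplacian, Proposition~\ref{prop:elliptic} gives the band-gap condition \eqref{eq:gaps}, so \eqref{eq:rho-dense} yields $\rho_A(d)\asymp d\sqrt{\log(e/d)}$ for $0<d\leq1$. This gives \eqref{eq:omega-sharp}. The final sentence follows by contradiction. A modulus $\omega(d)=o(d\sqrt{\log(e/d)})$ valid for all $\mu,\nu$ and all $\phi$ in the ball would force $\Omega_{A,1,T}(d)\leq\omega(d)$. That contradicts the lower half of \eqref{eq:omega-sharp} as $d\downarrow0$.

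The only step needing care is the bookkeeping in the lower bound. It must be checked that the admissible range of $a$ in \eqref{eq:time-choice} contains the values produced by small $d$; the hypothesis $T<r/2$ guarantees room for $\tau$. It must also be checked that passing from $a$ to $d$ costs only a constant factor, which \eqref{eq:rho-scale} provides. Everything else is a direct combination of Proposition~\ref{prop:linear}, Theorem~\ref{thm:lower} and Lemma~\ref{lem:dirac}.
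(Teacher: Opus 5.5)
Your proposal is correct and follows the paper's proof: the upper bound comes from Proposition~\ref{prop:linear}, the lower bound from the two shifted Dirac kernels of Theorem~\ref{thm:lower} with Lemma~\ref{lem:dirac}, and the elliptic rate from Proposition~\ref{prop:elliptic}. The only differences are in parameter choices: the paper takes $\tau=r/2$ (this is where $T<r/2$ is used) and $a=d$, so that $d_{\BL}=2d/(2+d)\leq d$ holds directly, whereas your choice $a=2d/(2-d)$, $\tau=(T+r)/2$ needs the extra step $\rhoAw(a)\geq\rhoAw(d)$ and in fact only requires $T<r$.
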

\begin{proof}
The upper estimate follows from Proposition~\ref{prop:linear}. For the lower estimate choose $\tau=r/2$ and $a=d$ in Theorem~\ref{thm:lower}. By Lemma~\ref{lem:dirac}, the distance between the two kernels is at most $d$. Proposition~\ref{prop:elliptic} gives~\eqref{eq:omega-sharp}.
\end{proof}

\begin{remark}[Quantifiers in the sharpness statement]\label{rem:quantifiers}
The lower-bound history depends on the displacement and on the resolved spectral bands. Every individual witness is smooth and uses finitely many modes, but the witnesses have no common bound on temporal derivatives or on positive powers of $A$. The result therefore does not assert the same lower rate for one fixed history, for a uniformly regular history class, or for every nonlinear feedback. It asserts a worst-case obstruction for the rough history ball, already within the positive linear subclass of~\eqref{eq:intro-pde}.
\end{remark}

\section{The sharp spatial-regularity threshold}\label{sec:regularity}
The spectral formula yields a necessary and sufficient weighted condition, not only a sufficient positive-power assumption. For the linear delay equation define
\begin{equation}\label{eq:L-infinity}
 L_{\infty,w}(T)=
 \sup_{\substack{\mu,\nu\in\calP(I),\ \mu\ne\nu\\
                 \norm\phi_{X_w}\leq1}}
 \frac{\norm{U_\mu(T)\phi-U_\nu(T)\phi}_H}
      {d_{\BL}(\mu,\nu)},\qquad 0<T<r/2.
\end{equation}

\begin{theorem}[Weighted Lipschitz criterion]\label{thm:weighted-L}
For every admissible weight,
\begin{equation}\label{eq:weighted-L-criterion}
 L_{\infty,w}(T)\asymp
 \left(\sum_{k\in\calJ(A)}w_k^{-2}\right)^{1/2},
\end{equation}
where either side may be infinite. Equivalently, the continuum map is Lipschitz on the unit ball of $X_w$ precisely when the sum is finite; the same condition is equivalent to a common bound for the corresponding Lipschitz constants on all spectral truncations.
\end{theorem}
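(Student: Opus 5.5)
The plan is to sandwich $L_{\infty,w}(T)$ between $\sup_{0<a\le a_*}\rhoAw(a)/a$ for a small fixed $a_*$ and $\sup_{0<d\le 2}\rhoAw(d)/d$, and then to invoke Corollary~\ref{cor:endpoint}. Write $S_w=\bigl(\sum_{k\in\calJ(A)}w_k^{-2}\bigr)^{1/2}$. Since $\rhoAw(a)/a$ is nonincreasing and each summand of $\rhoAw(a)^2/a^2$ increases to $w_k^{-2}$ as $a\downarrow0$, both suprema equal the limit as $a\downarrow0$, which is $S_w$. Hence the value of $a_*$ only affects constants.

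\emph{Upper bound.} Take $\mu,\nu\in\calP(I)$ and $\phi$ in the unit ball of $X_w$, and put $d=d_{\BL}(\mu,\nu)$. Note that $d\le 2$, because probability measures have total variation one. Proposition~\ref{prop:linear}, with $\psi=\phi$, $M=1$ and $R_w=1$, gives
\[
 \norm{U_\mu(T)\phi-U_\nu(T)\phi}_H\leq C\rhoAw(d),
\]
where $C=C(T,r,D_w)$. By \eqref{eq:rho-scale}, $\rhoAw(d)\le 2\rhoAw(d/2)$, and $\rhoAw(d/2)/(d/2)\le S_w$ by monotonicity of $\rhoAw(a)/a$. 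Therefore $\rhoAw(d)/d\le 2S_w$. Dividing by $d$ and taking the supremum yields $L_{\infty,w}(T)\le 2CS_w$, which is trivially true when $S_w=\infty$.

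\emph{Lower bound.} Choose $\tau=r/2$. Since $T<r/2<r$, the constraint \eqref{eq:time-choice} holds with $a_0=\min\{1,T/4,r/4\}$. For $0<a\le a_0$, Theorem~\ref{thm:lower} supplies $\phi$ with $\norm\phi_{X_w}\le1$ such that
\[
 \norm{u(T)-v(T)}_H\ge c\rhoAw(a).
\]
Here the kernels are $\mu=\delta_{-\tau}$ and $\nu=\delta_{-\tau-a}$, both probability measures on $I$ because $\tau+a<r$. Lemma~\ref{lem:dirac} gives $d_{\BL}(\mu,\nu)=2a/(2+a)\le a$. Consequently $L_{\infty,w}(T)\ge c\rhoAw(a)/a$, and letting $a\downarrow0$ gives $L_{\infty,w}(T)\ge cS_w$. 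This includes the case $S_w=\infty$, where the lower bound forces $L_{\infty,w}(T)=\infty$.

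\emph{Truncations.} Let $A_N$ denote the truncation of $A$ to finitely many eigenvectors. Its occupied band set $\calJ(A_N)$ is contained in $\calJ(A)$, and the constants in both bounds are independent of truncation. Hence the truncated Lipschitz constants are comparable to $\bigl(\sum_{k\in\calJ(A_N)}w_k^{-2}\bigr)^{1/2}$. Exhausting $\calJ(A)$ by the sets $\calJ(A_N)$ and applying monotone convergence shows that these constants are uniformly bounded exactly when $S_w<\infty$. This gives the stated equivalence, and the equivalence with Lipschitz continuity of the continuum map on the unit ball of $X_w$ is a restatement of \eqref{eq:weighted-L-criterion}.

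\emph{Main obstacle.} The only delicate point is bookkeeping. The lower bound must use genuine probability measures at small $a$ whose distance is comparable to $a$, which Lemma~\ref{lem:dirac} provides. The upper bound must cover all distances up to $2$, including large ones, which the scaling property \eqref{eq:rho-scale} and monotonicity of $\rhoAw(a)/a$ handle. I expect no step to require a new estimate.
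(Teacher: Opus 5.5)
Your proposal is correct and follows the paper's proof. The upper bound comes from the weighted linear estimate with $\rhoAw(d)\le d\,S_w$, the lower bound from Theorem~\ref{thm:lower} with shifted Dirac kernels plus monotone convergence as $a\downarrow0$, and the truncation statement from exhausting $\calJ(A)$ by the sets $\calJ(A_N)$. The one difference is that you apply Proposition~\ref{prop:linear} for all $d\le2$, whereas the paper splits at $d=1$ and uses \eqref{eq:linear-weighted} for larger distances. Your route works, and your scaling step and factor $2$ can be dropped, since each summand of $\rhoAw(d)^2/d^2$ is at most $w_k^{-2}$, so $\rhoAw(d)/d\le S_w$ for every $d>0$.
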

\begin{proof}
If the sum is finite, the upper bound follows from the weighted linear estimate and $\rhoAw(d)\leq d(\sum w_k^{-2})^{1/2}$ for $d\leq1$; for distances $d>1$, use~\eqref{eq:linear-weighted} and the fact that the sum is at least one. The lower bound follows from Theorem~\ref{thm:lower} and monotone convergence as $a\downarrow0$. For spectral truncations the same proof restricts the sum to the retained bands. The linear equation preserves each finite spectral space, and monotone convergence of these sums proves the final equivalence.
\end{proof}

\begin{theorem}[Logarithmic regularity hierarchy]\label{thm:log-hierarchy}
Assume~\eqref{eq:gaps}, and use the weight $w_\gamma$ in~\eqref{eq:log-weight}. For $0<a\leq1$,
\begin{equation}\label{eq:log-hierarchy}
 \rho_{A,w_\gamma}(a)\asymp
 \begin{cases}
 a[\log(e/a)]^{1/2-\gamma},&0\leq\gamma<1/2,\\[2pt]
 a\sqrt{\log(e+\log(1/a))},&\gamma=1/2,\\[2pt]
 a,&\gamma>1/2.
 \end{cases}
\end{equation}
These are sharp worst-case kernel-stability moduli for the linear delay equation on the unit ball of $C(I;D((1+\log A)^\gamma))$. In particular, resolution-uniform Lipschitz stability holds exactly when $\gamma>1/2$.
\end{theorem}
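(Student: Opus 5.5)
Under \eqref{eq:gaps}, $\rho_{A,w_\gamma}(a)$ is essentially a one-variable dyadic sum, and I will estimate it directly; the solution-map statements then come from the results already proved. Put $w_k=w_\gamma(2^k)=(1+k\log 2)^\gamma\asymp(1+k)^\gamma$; this weight is admissible with $D_w$ depending only on $\gamma$. Fix $0<a\leq1$ and let $K=\lfloor\log_2(a^{-1})\rfloor$, so that $\min\{a,2^{-k}\}=a$ for $k\leq K$ and $=2^{-k}$ for $k>K$. Then
\[
\rho_{A,w_\gamma}(a)^2=a^2\sum_{k\in\calJ(A),\,k\leq K}(1+k)^{-2\gamma}+\sum_{k\in\calJ(A),\,k>K}2^{-2k}(1+k)^{-2\gamma},
\]
up to constants depending on $\gamma$.

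The tail is a geometric sum. It is at most a constant times $2^{-2K}(1+K)^{-2\gamma}\lesssim a^2(1+K)^{-2\gamma}$, which is dominated by the last term of the head. For the upper bound, I drop the occupancy restriction and compare with $\sum_{k\leq K}(1+k)^{-2\gamma}$. This sum is $\asymp(1+K)^{1-2\gamma}$ for $\gamma<1/2$, $\asymp\log(2+K)$ for $\gamma=1/2$, and is bounded for $\gamma>1/2$. Since $1+K\asymp\log(e/a)$ and $\log(2+K)\asymp\log(e+\log(1/a))$, this gives the three upper bounds in \eqref{eq:log-hierarchy}.

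The matching lower bound is where \eqref{eq:gaps} is needed, and it is the main obstacle. In the case $\gamma>1/2$ the term $k=0$ already gives $\rho\geq a$, as in \eqref{eq:rho-properties}. For $\gamma\leq1/2$, the gap condition says that every block $\{k,\dots,k+q-1\}$ with $k\geq k_0$ contains an occupied band. Since $(1+k)^{-2\gamma}$ is nonincreasing, the occupied indices in $[k_0,K]$ contribute at least
\[
\frac1q\sum_{k_0+q\leq k\leq K}(1+k)^{-2\gamma}-O(1).
\]
This lower bound is of the same order as the full sum once $K\geq K_*(k_0,q,\gamma)$. For $K<K_*$, that is, for $a$ bounded below, all three expressions in \eqref{eq:log-hierarchy} are comparable to $a$, and $\rho\geq a$ covers that range. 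The constants therefore depend on $\gamma,k_0,q$, but not on spectral truncation when truncation retains the bands below the resolution. The $\gamma=1/2$ endpoint needs a separate check: the harmonic-type sum $\sum_{k\leq K}(1+k)^{-1}$ over one index per block is still $\gtrsim q^{-1}\log(1+K)$, so the double logarithm survives.

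Finally, I translate $\rho$ into solution-map moduli. Corollary~\ref{cor:omega}, which combines Proposition~\ref{prop:linear} and Theorem~\ref{thm:lower} with Lemma~\ref{lem:dirac}, gives $\Omega_{A,w_\gamma,T}(d)\asymp\rho_{A,w_\gamma}(d)$ for small $d$ on the unit ball of $X_{w_\gamma}=C(I;D((1+\log A)^\gamma))$. So the three rates are sharp worst-case moduli. For the Lipschitz claim, Theorem~\ref{thm:weighted-L} reduces it to whether $\sum_{k\in\calJ(A)}(1+k)^{-2\gamma}$ is finite. This sum is finite for $\gamma>1/2$ and, by the block argument above, divergent for $\gamma\leq1/2$. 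Uniformity over truncations follows from the final equivalence in Theorem~\ref{thm:weighted-L}.
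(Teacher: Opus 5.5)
Your proposal is correct and follows the paper's proof essentially step for step: you split at $\lfloor\log_2(a^{-1})\rfloor$, bound the tail geometrically by $a^2(1+K)^{-2\gamma}$, get the lower comparison from one occupied index per block of length $q$ with the band $k=0$ handling the bounded range of $a$, and appeal to Corollary~\ref{cor:omega} and Theorem~\ref{thm:weighted-L} for sharpness and the Lipschitz threshold. Your shifted-block monotonicity argument is just a concrete form of the paper's ``comparability of the weights inside a fixed-length block.''
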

\begin{proof}
Let $m=\lfloor\log_2(a^{-1})\rfloor$. Formula~\eqref{eq:rho} gives
\begin{equation}\label{eq:weighted-sum}
 \rho_{A,w_\gamma}(a)^2
 =a^2\sum_{\substack{k\in\calJ(A)\\k\leq m}}
            (1+k\log2)^{-2\gamma}
 +\sum_{\substack{k\in\calJ(A)\\k>m}}
            4^{-k}(1+k\log2)^{-2\gamma}.
\end{equation}
The second sum is bounded by $Ca^2(1+m)^{-2\gamma}$. By the bounded-gap condition, the first sum is comparable to
\[
 a^2\sum_{k=0}^m(1+k)^{-2\gamma}.
\]
For the lower comparison, group indices into blocks of length $q$, select one occupied index in each block, and use comparability of the weights inside a fixed-length block; the term $k=0$ absorbs the finitely many initial blocks. The partial sums grow as $(1+m)^{1-2\gamma}$ when $\gamma<1/2$, as $\log(e+m)$ when $\gamma=1/2$, and remain between positive constants when $\gamma>1/2$. This proves~\eqref{eq:log-hierarchy}. Sharpness for the solution map follows from Corollary~\ref{cor:omega}, and the Lipschitz criterion also follows directly from Theorem~\ref{thm:weighted-L}.
\end{proof}

\begin{corollary}[Positive spatial powers]\label{cor:power}
For every $\beta>0$, histories bounded in $C(I;D(A^\beta))$ give Lipschitz kernel stability for the linear equation, uniformly over spectral truncations. If~\eqref{eq:gaps} holds, the optimal constant satisfies
\begin{equation}\label{eq:power-constant}
 L_{\infty,x^\beta}(T)\asymp\beta^{-1/2},\qquad 0<\beta\leq1,
\end{equation}
with comparison constants independent of $\beta$ in this range.
\end{corollary}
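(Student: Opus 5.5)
The plan is to reduce everything to Theorem~\ref{thm:weighted-L} with the weight $w(x)=x^\beta$. First I will check admissibility in the sense of Definition~\ref{def:weight}. The weight is nondecreasing, $w(1)=1$, and $w(2x)=2^\beta w(x)$, so $D_w=2^\beta\leq2$ for $0<\beta\leq1$. This is the one place the range restriction enters, and it is important: the constants in Theorems~\ref{thm:spectral}, \ref{thm:lower} and \ref{thm:weighted-L} depend only on $D_w$, so for $\beta\in(0,1]$ they can be taken uniform in $\beta$. For $\beta>1$ the statement still gives a finite Lipschitz constant, since $D_w$ is fixed for each $\beta$ and the sum below is finite. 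Also, $D((1+\cdots)^{\cdot})$ plays no role here: $H_w=D(A^\beta)$ with norm $\norm{A^\beta x}$, which is the stated history space.

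Theorem~\ref{thm:weighted-L} then gives
\[
 L_{\infty,x^\beta}(T)\asymp\Bigl(\sum_{k\in\calJ(A)}4^{-\beta k}\Bigr)^{1/2}.
\]
This sum is at most $\sum_{k\ge0}4^{-\beta k}=(1-4^{-\beta})^{-1}<\infty$ for every $\beta>0$. That settles the qualitative assertion, including uniformity over spectral truncations, which by the last clause of Theorem~\ref{thm:weighted-L} only removes bands.

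For the rate, I use $1-4^{-\beta}\geq c\beta$ on $(0,1]$, which follows from $1-e^{-x}\geq(1-e^{-x_0})x/x_0$ on $[0,x_0]$ with $x_0=2\log 2$. This gives the upper bound $C\beta^{-1/2}$.

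The lower bound uses the band-gap condition~\eqref{eq:gaps}. For $j\geq0$, each block $\{k_0+jq,\ldots,k_0+(j+1)q-1\}$ contains an occupied index $k_j\leq k_0+(j+1)q$. Hence
\[
 \sum_{k\in\calJ(A)}4^{-\beta k}\geq 4^{-\beta(k_0+q)}\sum_{j\ge0}4^{-\beta qj}
 =\frac{4^{-\beta(k_0+q)}}{1-4^{-\beta q}}\geq\frac{4^{-(k_0+q)}}{\beta q\log4}.
\]
Here I use $\beta\leq1$ for the numerator and $1-e^{-x}\leq x$ for the denominator. This yields $L_{\infty,x^\beta}(T)\geq c\beta^{-1/2}$, with $c$ depending on $k_0$, $q$, $T$ and $r$ but not on $\beta$.

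The only delicate point I expect is bookkeeping: confirming that the comparison constants in Theorem~\ref{thm:weighted-L} inherit uniformity from $D_w\le 2$ and not from $\beta$ itself. This holds because they come from Theorem~\ref{thm:lower} and Proposition~\ref{prop:linear}, which are stated with dependence on $D_w$ only, so I will trace that once explicitly.
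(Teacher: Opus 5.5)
Your proposal is correct and follows the paper's proof. You take $w(x)=x^\beta$ with $D_w=2^\beta\leq 2$, so the constants of Theorem~\ref{thm:weighted-L} are uniform for $0<\beta\leq1$; you bound the geometric sum above by $(1-4^{-\beta})^{-1}\lesssim\beta^{-1}$, and below by selecting one occupied index in each block of length $q$. Your version only spells out the elementary inequalities. Note that $\beta\leq1$ is also used in those inequalities, so it is not only used for $D_w$ as you remark.
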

\begin{proof}
Here $w_k=2^{\beta k}$ and $D_w=2^\beta\leq2$ for $\beta\leq1$. The upper geometric sum is comparable to $\beta^{-1}$. Under~\eqref{eq:gaps}, one occupied index per block of a fixed length gives the matching lower sum, uniformly for $0<\beta\leq1$. Apply Theorem~\ref{thm:weighted-L}.
\end{proof}

\begin{remark}[Temporal regularity is a different sufficient condition]\label{rem:time-regular}
If a reference feedback path $g$ is uniformly Lipschitz from $[-r,T]$ into $H$, then the instantaneous residual satisfies
\[
 \norm{\int_I g(t+\theta)\dd(\mu-\nu)(\theta)}_H
 \leq [\norm g_\infty+\Lip_H(g)]d_{\BL}(\mu,\nu).
\]
The difference argument in Section~\ref{sec:semilinear} then gives Lipschitz stability directly. This condition concerns the entire reference feedback path, including positive times. It is not silently inferred from temporal Lipschitz continuity of the prescribed negative-time history.
\end{remark}

\section{Semilinear upper bounds}\label{sec:semilinear}
Set $V=D(A^{1/2})$ with norm $\norm{x}_V=\norm{A^{1/2}x}_H$, and let $V'$ be its dual with pivot $H$. Use the spaces $X$ and $Y_T$ defined in Section~\ref{sec:lower}.
For a trajectory $u$, its history segment is $u_t(\theta)=u(t+\theta)$. Throughout this section the forcing $h\in V'$ is independent of time.

\begin{assumption}\label{ass:reaction}
The maps $f,B:H\to H$ are Lipschitz on bounded balls. There are constants $a_f\in\R$ and $b_0,b_1,c_f\geq0$ such that
\begin{equation}\label{eq:coercive}
 \ip{f(x)}{x}\geq a_f\norm{x}^2-c_f,
 \qquad \norm{B(x)}\leq b_0+b_1\norm{x},\qquad x\in H.
\end{equation}
\end{assumption}
These are hypotheses on operators on $H$. In particular, a pointwise cubic reaction on $L^2(\Omega)$ is not included merely by virtue of being a polynomial. Only upper estimates are asserted for the general semilinear class. The matching lower bounds concern the linear subclass already treated in Section~\ref{sec:lower}.

Define
\[
 D_\mu u(t)=\int_I B(u(t+\theta))\dd\mu(\theta).
\]
A weak solution of~\eqref{eq:intro-pde} belongs to $Y_T\cap L^2(0,T;V)$, has derivative in $L^2(0,T;V')$, equals $\phi$ on $I$, and satisfies the equation in $V'$ almost everywhere. For $z=A^{-1}h\in V$, its equivalent shifted mild formula is
\begin{equation}\label{eq:shifted-mild}
 u(t)=z+E(t)(\phi(0)-z)
       +\int_0^t E(t-s)\bigl[-f(u(s))+D_\mu u(s)\bigr]\dd s.
\end{equation}
The use of the stationary shift keeps the common rough forcing out of all kernel-difference estimates.

\begin{proposition}[Finite-time solution framework]\label{prop:wellposed}
Under Assumption~\ref{ass:reaction}, every $\phi\in X$ and $\mu\in\calM(I)$ determine a unique global weak solution. For every $R,M,T<\infty$ there is $R_T<\infty$ such that
\begin{equation}\label{eq:uniform-state}
 \norm{u_\mu^\phi}_{Y_T}\leq R_T
 \quad\hbox{if }\norm\phi_X\leq R,
                   \quad\norm\mu_{\TV}\leq M.
\end{equation}
The same bound, with a common constant, holds for the spectral Galerkin equations with projected histories, forcing, reaction, and feedback.
\end{proposition}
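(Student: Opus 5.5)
We will prove Proposition~\ref{prop:wellposed} by a local fixed-point construction in the shifted mild formulation~\eqref{eq:shifted-mild}, followed by an energy estimate that excludes blow-up. We then check that mild and weak solutions coincide.

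\emph{Local existence.} Fix $\phi\in X$ and $\mu$ with $\norm\mu_{\TV}\leq M$. Work in the closed set
\[
\{u\in C([-r,t_0];H):u|_I=\phi,\ \sup_{[0,t_0]}\norm{u(t)}\leq \rho\},
\]
with $\rho$ chosen larger than $\norm{z}+\norm{\phi(0)-z}+1$. On this set $f$ and $B$ are Lipschitz with some constant $L_\rho$. Since $\norm{E(t)}\leq1$ and $\norm{D_\mu u(s)-D_\mu\widetilde u(s)}\leq ML_\rho\sup_{[-r,s]}\norm{u-\widetilde u}$, the right-hand side of~\eqref{eq:shifted-mild} is a contraction into the set for $t_0$ small. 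The smallness depends only on $\rho$, $M$, $\norm\phi_X$ and $\norm z$. Iterating gives a unique maximal mild solution, and it blows up in $H$-norm if the existence time is finite. Continuity at $t=0$ uses only $\phi(0)$, and the delayed terms are continuous in $s$ because $\phi\in X$.

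\emph{Equivalence with weak solutions.} Set $F(s)=-f(u(s))+D_\mu u(s)$. This function lies in $C([0,T];H)$, hence in $L^2(0,T;H)$. Standard maximal regularity for the Lions variational framework with $V=D(A^{1/2})$ shows that $w=u-z$ solves $w'+Aw=F$ weakly. The stationary part satisfies $Az=h$ in $V'$. Moreover the weak solution lies in $L^2(0,T;V)\cap H^1(0,T;V')\subset C([0,T];H)$ and is unique, by the energy identity applied to a difference. Thus mild and weak solutions agree, including uniqueness.

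\emph{A priori bound (main step).} Test the equation with $u$, using the Lions--Magenes identity $\tfrac12\frac{\dd}{\dd t}\norm u^2=\ip{u'}{u}$. With coercivity~\eqref{eq:coercive} this gives
\[
\tfrac12\frac{\dd}{\dd t}\norm u^2+\norm{u}_V^2+a_f\norm u^2-c_f\leq \norm{h}_{V'}\norm u_V+\norm u\,M\bigl(b_0+b_1\sup_{[-r,t]}\norm u\bigr).
\]
Absorb $\norm h_{V'}\norm u_V$ into $\norm u_V^2$ by Young's inequality. Then pass to the running supremum $Z(t)=\sup_{[-r,t]}\norm u^2$. This requires a little care, since the delay term involves the past supremum. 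Integrate first, then take the supremum on both sides. The result is
\[
Z(t)\leq R^2+C\int_0^t(1+Z(s))\dd s.
\]
Gronwall then yields~\eqref{eq:uniform-state} with $R_T$ depending on $R,M,T,a_f,b_0,b_1,c_f,\norm h_{V'}$, and it also bounds $\int_0^T\norm u_V^2$. Together with the blow-up alternative this gives global existence.

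\emph{Galerkin equations.} The same argument applies verbatim to the projected problems. The projection $P_N$ commutes with $E(t)$ and is an $H$- and $V'$-contraction. The projected maps $P_Nf(P_N\cdot)$ and $P_NB(P_N\cdot)$ inherit the Lipschitz bounds. Coercivity is preserved on $P_NH$ since $\ip{P_Nf(x)}{x}=\ip{f(x)}{x}$ for $x\in P_NH$, and $\norm{P_NB(x)}\leq b_0+b_1\norm x$. All constants are therefore independent of $N$.

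The hard part is the energy estimate. We must justify testing with $u$ for weak solutions whose forcing $h$ lies only in $V'$, and handle the delay term through a running supremum rather than a pointwise-in-time Gronwall inequality.
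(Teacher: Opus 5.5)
Your proposal is correct and follows essentially the paper's route: a contraction for the shifted mild formula, identification of mild and weak solutions, the energy estimate closed through the running supremum and Gronwall, continuation (your blow-up alternative), and $N$-uniformity from the contraction and self-adjointness properties of the Galerkin projections. The one difference is in justifying weak regularity. You cite J.-L. Lions's variational theorem, which is not ``maximal regularity'' in the usual sense. The paper instead proves it directly by a Cauchy argument for the projections $\Pi_m(u-z)$, and gets the converse by projecting a weak solution onto eigenvectors. When you cite the linear theory, you should state explicitly that the Lions solution coincides with the variation-of-constants formula, which follows from that same eigenvector projection.
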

\begin{proof}
For fixed history, the right side of~\eqref{eq:shifted-mild} maps a sufficiently large closed ball of continuous trajectories into itself on a short interval. On a ball of radius $R_*$ its trajectory-dependent part has Lipschitz constant at most
\[
 t_0\bigl[L_f(R_*)+ML_B(R_*)\bigr],
\]
since
\[
 \norm{D_\mu u(t)-D_\mu v(t)}
 \leq ML_B(R_*)\sup_{s\in[t-r,t]}\norm{u(s)-v(s)}.
\]
Strong continuity of $E(t)$ ensures that the nonintegral part remains in the chosen ball when $t_0$ is small. The contraction principle therefore gives a unique local mild solution, also when $\mu$ has an atom at zero.

Here is a direct justification of the weak regularity used in the energy argument. On the local interval put $y=u-z$ and $F=-f(u)+D_\mu u\in C([0,t_0];H)$. For the projection $\Pi_m$ onto the first $m$ eigenvectors, $y_m=\Pi_my$ solves the finite linear system $y_m'+Ay_m=\Pi_mF$. Testing by $y_m$ and using $\lambda_1=1$ yields
\[
 \frac{\mathrm d}{\mathrm dt}\norm{y_m}^2+\norm{y_m}_V^2
 \leq\norm F^2,\qquad
 \norm{y_m'}_{V'}\leq\norm{y_m}_V+\norm F.
\]
Apply the same estimates to $y_m-y_n$ with data $(\Pi_m-\Pi_n)y(0)$ and forcing $(\Pi_m-\Pi_n)F$. The projected data converge in $H$ and the projected forcing converges in $L^2(0,t_0;H)$. Thus $y_m$ converges in $C([0,t_0];H)\cap L^2(0,t_0;V)$ and $y_m'$ in $L^2(0,t_0;V')$. The limit is the mild solution, so $u=y+z$ has the asserted weak regularity. Its energy identity follows by passage to the limit. Conversely, projection of a weak solution gives the scalar variation-of-constants formulas and hence~\eqref{eq:shifted-mild}.

Test the equation by $u$, and set $Y(t)=\sup_{-r\leq s\leq t}\norm{u(s)}^2$. The bound $\norm{D_\mu u(t)}\leq M(b_0+b_1\sqrt{Y(t)})$, Young's inequality, and~\eqref{eq:coercive} imply
\begin{equation}\label{eq:energy-bound}
 \frac{\mathrm d}{\mathrm dt}\norm{u(t)}^2+\norm{u(t)}_V^2
 \leq C(1+Y(t)).
\end{equation}
After integration and taking the running supremum, Gronwall gives~\eqref{eq:uniform-state} and the $L^2(0,T;V)$ bound. On the resulting $H$-ball, $f(u)$ and $D_\mu u$ are bounded in $H$, so the equation also bounds the derivative in $L^2(0,T;V')$. If a maximal existence interval had a finite endpoint,~\eqref{eq:shifted-mild} with bounded forcing would give a continuous limit there; the local construction could then be restarted. This proves global existence. Subtracting the mild formulas and applying Gronwall proves uniqueness. Finally, spectral projections are contractions on $H,V,V'$ and preserve the coercivity inequality when tested against a projected vector. The same proof gives all bounds uniformly for the Galerkin equations.
\end{proof}

Use $U_\mu(t)\phi=u_\mu^\phi(t)$ also for the semilinear current-state map and write $S_\mu(t)\phi=(u_\mu^\phi)_t$ for the history map. The norm of a difference in $Y_T$ equals the supremum of the corresponding history differences over $0\leq t\leq T$.

\begin{theorem}[Spectral and weak-star stability]\label{thm:semilinear}
Fix $R,M,T<\infty$ and assume~\ref{ass:reaction}. There is a constant $C=C(R,M,T,f,B,h,r)$, independent of spectral truncation, such that
\begin{equation}\label{eq:semilinear-bound}
 \norm{u_\mu^\phi-u_\nu^\psi}_{Y_T}
 \leq C\bigl(\norm{\phi-\psi}_X+\rhoA(d_{\BL}(\mu,\nu))\bigr)
\end{equation}
for $\norm\phi_X,\norm\psi_X\leq R$ and $\norm\mu_{\TV},\norm\nu_{\TV}\leq M$. In particular, for $0<d=d_{\BL}(\mu,\nu)\leq1$,
\begin{equation}\label{eq:semilinear-psi}
 \sup_{0\leq t\leq T}\norm{S_\mu(t)\phi-S_\nu(t)\psi}_X
 \leq C\bigl(\norm{\phi-\psi}_X+d\sqrt{\log(e/d)}\bigr).
\end{equation}
For the $N$-mode system, $\rhoA$ in~\eqref{eq:semilinear-bound} may be replaced by $\rhoN$, with the same type of constant.
\end{theorem}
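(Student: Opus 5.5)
The plan is to mirror the proof of Proposition~\ref{prop:linear}, with the linear trajectory term replaced by local Lipschitz bounds on the ball provided by Proposition~\ref{prop:wellposed}. Set $u=u_\mu^\phi$, $v=u_\nu^\psi$ and $q=u-v$. Proposition~\ref{prop:wellposed} gives a common radius $R_T$ with $\norm u_{Y_T},\norm v_{Y_T}\leq R_T$, uniformly over spectral truncations. We then subtract the two shifted mild formulas~\eqref{eq:shifted-mild}. The stationary shift $z$ cancels, leaving $E(t)(\phi(0)-\psi(0))$, the reaction difference $-\int_0^tE(t-s)[f(u(s))-f(v(s))]\dd s$, and the feedback difference. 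We split the feedback difference as
\[
 D_\mu u-D_\nu v=\int_I\bigl[B(u(s+\theta))-B(v(s+\theta))\bigr]\dd\mu(\theta)
 +\int_I B(v(s+\theta))\dd(\mu-\nu)(\theta).
\]

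The first three contributions are handled with contraction bounds and the Lipschitz constants $L_f(R_T)$ and $L_B(R_T)$. Since $q=\phi-\psi$ on $I$, they are bounded by $\norm{\phi-\psi}_X+(L_f+ML_B)\int_0^t\sup_{-r\leq s'\leq\sigma}\norm{q(s')}\dd\sigma$. The measure part is the key term. We put $g(s)=B(v(s))$ for $s\in[-r,T]$, which is continuous in $H$ with $\norm g_\infty\leq b_0+b_1R_T$. Fubini rewrites $\int_0^tE(t-s)\int_Ig(s+\theta)\dd(\mu-\nu)\dd s$ as $\int_IG_{t,g}(\theta)\dd(\mu-\nu)(\theta)$, exactly as in~\eqref{eq:G}. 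Theorem~\ref{thm:measure} with $w=1$ then bounds this by $C(b_0+b_1R_T)\rhoA(d_{\BL}(\mu,\nu))$, uniformly in $t\leq T$. Crucially, no regularity of $g$ beyond continuity and boundedness is needed, since the semigroup integration supplies the modulus (Remark~\ref{rem:instantaneous}).

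Next we set $Q(t)=\sup_{-r\leq s\leq t}\norm{q(s)}$. This gives $Q(t)\leq\norm{\phi-\psi}_X+C\rhoA(d)+C\int_0^tQ$, and Gronwall yields~\eqref{eq:semilinear-bound}. Because the $Y_T$ norm of a difference is the supremum of the history differences, \eqref{eq:semilinear-psi} follows from~\eqref{eq:rho-properties}, namely $\rhoA(d)\leq Cd\sqrt{\log(e/d)}$ for $d\leq1$. For the $N$-mode system we repeat the argument with $A_N$ acting on the finite spectral space. Theorem~\ref{thm:spectral} and Lemma~\ref{lem:G} apply there with $\calJ(A_N)$, so the modulus becomes $\rhoN$. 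The constants are the same, since Proposition~\ref{prop:wellposed} gives uniform Galerkin bounds and projections are contractions.

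The main obstacle I expect is justifying the Fubini interchange and the continuity of $g$. We need $s\mapsto B(v(s))$ to be continuous into $H$, which follows because $B$ is Lipschitz on balls and $v\in Y_T$. We also need the mixed integral against a signed measure to be a well-defined Bochner integral. This should follow from joint continuity in $(s,\theta)$ and finiteness of $|\mu-\nu|$. Everything else is routine.
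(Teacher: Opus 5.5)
Your proposal is correct and follows essentially the same argument as the paper. You subtract the shifted mild formulas so that $z$ cancels, and you isolate the residual $\int_0^tE(t-s)\int_I B(v(s+\theta))\dd(\mu-\nu)(\theta)\dd s$, which Fubini identifies with the quantity in Theorem~\ref{thm:measure} for $g=B(v)$, $w=1$ and $\norm g_\infty\leq b_0+b_1R_T$. You then close with Gronwall on the running supremum and handle the $N$-mode case by restricting to the occupied bands of $A_N$, exactly as the paper does.
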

\begin{proof}
Let $u=u_\mu^\phi$, $v=u_\nu^\psi$, and $q=u-v$. The common forcing cancels. Subtraction of the mild formulas gives
\begin{align}
 q(t)={}&E(t)(\phi(0)-\psi(0))
 -\int_0^t E(t-s)[f(u(s))-f(v(s))]\dd s\notag\\
 &+\int_0^t E(t-s)[D_\mu u(s)-D_\mu v(s)]\dd s
 +R_{\mu,\nu}(t),\label{eq:diff-mild}\\
 R_{\mu,\nu}(t)={}&\int_0^t E(t-s)
       \int_I B(v(s+\theta))\dd(\mu-\nu)(\theta)\dd s.
       \label{eq:residual}
\end{align}
By Fubini,~\eqref{eq:residual} is the expression in Theorem~\ref{thm:measure}, with $g=B(v)$ and $w=1$. Proposition~\ref{prop:wellposed} gives $\norm g_\infty\leq b_0+b_1R_T$. Hence
\[
 \sup_{0\leq t\leq T}\norm{R_{\mu,\nu}(t)}
 \leq C\rhoA(d_{\BL}(\mu,\nu)).
\]
Put $Z(t)=\sup_{-r\leq s\leq t}\norm{q(s)}$. The local Lipschitz constants on the common ball imply
\[
 Z(t)\leq\norm{\phi-\psi}_X+C\rhoA(d_{\BL}(\mu,\nu))
    +[L_f(R_T)+ML_B(R_T)]\int_0^t Z(s)\dd s.
\]
Gronwall proves~\eqref{eq:semilinear-bound}, and~\eqref{eq:rho-properties} proves~\eqref{eq:semilinear-psi}. For the projected equation, the feedback path takes values in $H_N$, so Theorem~\ref{thm:measure} uses only its occupied spectral bands. Proposition~\ref{prop:wellposed} supplies the dimension-independent constants.
\end{proof}

\begin{corollary}[Weighted reference paths]\label{cor:weighted-semilinear}
Under the hypotheses of Theorem~\ref{thm:semilinear}, suppose in addition that $B(v)\in C([-r,T];H_w)$ and $\norm{B(v)}_{C(H_w)}\leq R_w$. Then
\begin{equation}\label{eq:weighted-semilinear}
 \norm{u_\mu^\phi-u_\nu^\psi}_{Y_T}
 \leq C\bigl(\norm{\phi-\psi}_X+R_w\rhoAw(d_{\BL}(\mu,\nu))\bigr).
\end{equation}
The constant uses the same $H$-ball Lipschitz bounds as before, as well as $D_w$.
\end{corollary}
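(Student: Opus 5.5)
The plan is to repeat the proof of Theorem~\ref{thm:semilinear} verbatim and change only the estimate of the measure residual. With $u=u_\mu^\phi$, $v=u_\nu^\psi$ and $q=u-v$, I would subtract the shifted mild formulas~\eqref{eq:shifted-mild}. This gives the decomposition~\eqref{eq:diff-mild}: an initial term $E(t)(\phi(0)-\psi(0))$, a reaction difference, a trajectory part of the feedback difference, and the residual $R_{\mu,\nu}$ in~\eqref{eq:residual}. The common forcing cancels, as before.

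The residual is the only term that changes. By Fubini, $R_{\mu,\nu}(t)=\int_I G_{t,g}(\theta)\dd(\mu-\nu)(\theta)$ with $g=B(v)\in C([-r,T];H_w)$. I would now apply Theorem~\ref{thm:measure} with the admissible weight $w$ rather than $w=1$. Its hypotheses are that $\norm\mu_{\TV},\norm\nu_{\TV}\le M$ and that $g$ is continuous with values in $H_w$; both hold by assumption. It gives
\[
 \sup_{0\le t\le T}\norm{R_{\mu,\nu}(t)}_H\le C(M,r,D_w)\,R_w\,\rhoAw(d_{\BL}(\mu,\nu)).
\]
The constant depends only on $M,r,D_w$, so it is independent of $T$ and of the resolution.

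The remaining terms are estimated in $H$ exactly as before. Proposition~\ref{prop:wellposed} places $u$ and $v$ in the $H$-ball of radius $R_T$. The reaction difference is bounded by $L_f(R_T)\int_0^t Z$, and the trajectory part of the feedback by $ML_B(R_T)\int_0^t Z$, where $Z(t)=\sup_{-r\le s\le t}\norm{q(s)}$; both bounds use that $E$ is a contraction. This gives
\[
 Z(t)\le\norm{\phi-\psi}_X+CR_w\rhoAw(d_{\BL}(\mu,\nu))+[L_f(R_T)+ML_B(R_T)]\int_0^tZ(s)\dd s,
\]
and Gronwall yields~\eqref{eq:weighted-semilinear} with constant $e^{(L_f(R_T)+ML_B(R_T))T}$ times the constants above.

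There is no real obstacle. The one point to check is that Theorem~\ref{thm:measure} needs only $g\in C([-r,T];H_w)$ with the stated sup bound, and not any weighted regularity of $v$ itself. Since only $B(v)$ enters the residual, this is exactly the assumption supplied. It also explains why the statement asks for the hypothesis on $B(v)$ rather than on $v$.
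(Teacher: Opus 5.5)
Your proposal is correct and matches the paper's proof. The only change from Theorem~\ref{thm:semilinear} is applying Theorem~\ref{thm:measure} with the weight $w$ and $g=B(v)$ to the residual~\eqref{eq:residual}. The remaining terms of~\eqref{eq:diff-mild} are estimated in $H$, and the same Gronwall argument completes the proof.
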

\begin{proof}
Use the weight $w$ in Theorem~\ref{thm:measure} when estimating~\eqref{eq:residual}. The remaining terms in~\eqref{eq:diff-mild} are still estimated in $H$.
\end{proof}

For nonlinear equations, weighted regularity of the entire feedback path is an additional hypothesis. It does not follow solely from weighted regularity of the initial history. The linear identity-feedback case is covered without that extra propagation assumption by Proposition~\ref{prop:linear}.

\section{Spatial resolution and memory quadrature}\label{sec:discrete}
Let
\[
 H_N=\spanof\{e_1,\ldots,e_N\},\qquad
 \Pi_N:H\to H_N,\qquad A_N=A|_{H_N}.
\]
The notation $\Pi_N$ distinguishes the Galerkin projection from the band projections $P_k$ in~\eqref{eq:bands}. Put
\begin{equation}\label{eq:bandcount}
 n_N=\#\calJ(A_N).
\end{equation}
The reference model in the sharp lower results of this section is the linear equation $f=0$, $B=\mathrm{Id}$, $h=0$. Its projected current-state map is denoted by $U_{\mu,N}(t)$.

\subsection{The optimal finite-resolution Lipschitz constant}
Fix $0<T<r/2$. Define
\begin{equation}\label{eq:L-N}
 L_{N,w}(T)=
 \sup_{\substack{\mu,\nu\in\calP(I),\ \mu\ne\nu\\
                 \phi\in C(I;H_N),\ \norm\phi_{X_w}\leq1}}
 \frac{\norm{U_{\mu,N}(T)\phi-U_{\nu,N}(T)\phi}_H}
      {d_{\BL}(\mu,\nu)}.
\end{equation}

\begin{theorem}[Sharp finite-resolution Lipschitz constants]\label{thm:L-N}
For every admissible weight,
\begin{equation}\label{eq:L-N-exact}
 L_{N,w}(T)\asymp
       \left(\sum_{k\in\calJ(A_N)}w_k^{-2}\right)^{1/2}.
\end{equation}
The comparison constants are independent of $N$. In particular,
\begin{equation}\label{eq:L-N-unweighted}
 L_{N,1}(T)\asymp\sqrt{n_N}.
\end{equation}
If $A$ is unbounded with compact inverse, $L_{N,1}(T)\to\infty$.
\end{theorem}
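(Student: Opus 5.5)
The plan is to treat the projected system as a linear delay equation in its own right, with generator $A_N$ on $H_N$, and apply Theorem~\ref{thm:weighted-L} to the operator $A_N$. The eigenvalue normalization $\lambda_1=1$ is inherited, and the weight is unchanged. Since $\Pi_N$ commutes with $A$ and with $B=\mathrm{Id}$, the Galerkin equation with a history in $C(I;H_N)$ coincides with~\eqref{eq:linear-model} posed on $H_N$ with generator $A_N$. Hence $L_{N,w}(T)$ is exactly the quantity $L_{\infty,w}(T)$ of~\eqref{eq:L-infinity} for the operator $A_N$. Theorem~\ref{thm:weighted-L} then gives~\eqref{eq:L-N-exact}. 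The point requiring care is that the comparison constants there depend only on $T,r,D_w$ and not on the spectrum or dimension. This is asserted in Theorem~\ref{thm:lower}, Theorem~\ref{thm:measure} and Proposition~\ref{prop:linear}, so they are uniform in $N$.

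In more detail, I would check both directions separately to confirm the uniformity. For the upper bound, I apply Proposition~\ref{prop:linear} on $H_N$, with $\phi=\psi$ and $\norm\mu_{\TV},\norm\nu_{\TV}\leq1$. This gives $\norm{U_{\mu,N}(T)\phi-U_{\nu,N}(T)\phi}\leq C\rho_{A_N,w}(d)$. For $d\leq1$, the monotonicity of $\rho_{A_N,w}(a)/a$ in~\eqref{eq:rho} gives $\rho_{A_N,w}(d)\leq d(\sum_{k\in\calJ(A_N)}w_k^{-2})^{1/2}$. For $d>1$ (note $d\leq2$ for probability measures), I use~\eqref{eq:linear-weighted} together with the fact that the sum is at least $w_0^{-2}=1$.

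For the lower bound, I apply Theorem~\ref{thm:lower} to $A_N$ with $\tau=r/2$, so that~\eqref{eq:time-choice} holds for small $a$. The witness histories $\phi_a$ and $\phi_0$ are built from eigenvectors in the occupied bands of $A_N$, so they lie in $C(I;H_N)$. By Lemma~\ref{lem:dirac}, the two kernels satisfy $d_{\BL}\leq a$. This yields $L_{N,w}(T)\geq c\rho_{A_N,w}(a)/a$. Letting $a\downarrow0$ and invoking Corollary~\ref{cor:endpoint}, by monotone convergence over the finitely many bands of $\calJ(A_N)$, gives the matching lower bound.

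For~\eqref{eq:L-N-unweighted}, I put $w=1$, so the sum counts occupied bands and equals $n_N$. For the final claim, $A$ unbounded with compact inverse means $\lambda_j\to\infty$. Therefore every band index $k$ with $2^k\leq\lambda_N$ arbitrarily large is reached, the number of occupied bands satisfies $n_N\to\infty$, and so $L_{N,1}(T)\to\infty$. This uses the fact that $\calJ(A)$ is infinite and that $\calJ(A_N)$ increases to it. I expect the main obstacle to be only bookkeeping: verifying that no constant in the chain secretly depends on $\lambda_N$ or $N$. This holds since all earlier estimates used contractions and band-by-band arguments.
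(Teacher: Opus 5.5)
Your proposal is correct and follows essentially the paper's own route: the upper bound comes from the weighted linear estimate on $H_N$, using $\rho_{A_N,w}(d)\leq d(\sum_{k\in\calJ(A_N)}w_k^{-2})^{1/2}$ together with the crude bound for $d>1$; the lower bound comes from the two shifted Dirac kernels of Theorem~\ref{thm:lower} with $a\downarrow0$; and $n_N\to\infty$ because $\calJ(A)$ is infinite and the nested sets $\calJ(A_N)$ exhaust it. One small wording point in the last step: not every band with $2^k\leq\lambda_N$ need be occupied (see Example~\ref{ex:sparse}), so the justification that counts is the one in your following sentence, that $\calJ(A)$ is infinite and $\calJ(A_N)$ increases to it.
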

\begin{proof}
The weighted linear upper estimate gives a constant times $\rho_{A_N,w}(d)$ for $0<d\leq1$. Formula~\eqref{eq:rho} implies
\[
 \rho_{A_N,w}(d)/d
 \leq\left(\sum_{k\in\calJ(A_N)}w_k^{-2}\right)^{1/2}.
\]
For $d>1$, boundedness of the solution map by~\eqref{eq:linear-weighted} gives the same upper estimate after increasing the constant, since the sum includes the term $k=0$ and is at least one. For the lower estimate, take the two shifted Dirac kernels from Theorem~\ref{thm:lower} and let $a\downarrow0$. The quotient of their bounded-Lipschitz distance by $a$ tends to one, and
\[
 \lim_{a\downarrow0}\frac{\rho_{A_N,w}(a)}{a}
 =\left(\sum_{k\in\calJ(A_N)}w_k^{-2}\right)^{1/2}.
\]
Theorem~\ref{thm:lower} is uniform in $N$. Finally, an unbounded spectrum meets infinitely many dyadic bands, and the nested spaces eventually contain an eigenvector from each such band. Thus $n_N\to\infty$.
\end{proof}

\begin{corollary}[Two-resolution law for elliptic spectra]\label{cor:phase}
Suppose $A$ satisfies~\eqref{eq:gaps}. For $0<a\leq1$ and every $N$,
\begin{equation}\label{eq:phase-law}
 \rhoN(a)\asymp
 a\sqrt{1+\log\!\bigl(\min\{\lambda_N,a^{-1}\}\bigr)},
\end{equation}
and
\begin{equation}\label{eq:L-log}
 L_{N,1}(T)\asymp\sqrt{1+\log\lambda_N}.
\end{equation}
For the normalized Dirichlet Laplacian, the last expression is also comparable to $\sqrt{\log(e+N)}$.
\end{corollary}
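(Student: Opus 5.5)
The plan is to reduce everything to counting occupied bands of $A_N$ below a cutoff. Since $H_N$ is spanned by the first $N$ eigenvectors, $\calJ(A_N)=\{k\in\calJ(A):2^k\leq\lambda_N\}$, so $\calJ(A_N)\subset\{0,\ldots,K_N\}$ with $K_N=\lfloor\log_2\lambda_N\rfloor$, and $K_N\in\calJ(A_N)$ because $\lambda_N$ itself lies in band $K_N$. With $w=1$, formula~\eqref{eq:rho} gives
\[
 \rhoN(a)^2=a^2\#\{k\in\calJ(A_N):k\leq m\}+\sum_{k\in\calJ(A_N),\,k>m}4^{-k},
 \qquad m=\lfloor\log_2(a^{-1})\rfloor .
\]
The tail is at most $\tfrac43 4^{-m-1}\leq\tfrac43a^2$. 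It is therefore absorbed into the constant, using that the $k=0$ term already contributes $a^2$.

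For the upper bound, the count in the first sum is at most $1+\min\{m,K_N\}\leq 1+\log_2\min\{a^{-1},\lambda_N\}$. For the lower bound, I will use~\eqref{eq:gaps} exactly as in the proof of Theorem~\ref{thm:log-hierarchy}. Split $\{k_0,\ldots,\min\{m,K_N\}\}$ into consecutive blocks of length $q$; each complete block contains an occupied index of $\calJ(A)$. That index is at most $K_N$, so it lies in $\calJ(A_N)$. This gives at least $(\min\{m,K_N\}-k_0)/q-1$ indices. The term $k=0$ handles the case where $\min\{m,K_N\}$ is below a fixed threshold $k_0+2q$. Together these show the count is comparable to $1+\min\{m,K_N\}\asymp 1+\log\min\{\lambda_N,a^{-1}\}$, which proves~\eqref{eq:phase-law}.

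For~\eqref{eq:L-log}, apply Theorem~\ref{thm:L-N}, namely $L_{N,1}(T)\asymp\sqrt{n_N}$. By the same block count with $m$ removed, $n_N\asymp 1+K_N\asymp1+\log\lambda_N$. Equivalently, let $a\downarrow0$ in~\eqref{eq:phase-law}, because $\rhoN(a)/a\to\sqrt{n_N}$. For the Dirichlet Laplacian, Proposition~\ref{prop:elliptic} gives $\lambda_N\asymp N^{2/n}$, so $\log\lambda_N=\tfrac2n\log N+O(1)$. With $\lambda_N\geq1$, $1+\log\lambda_N$ is then comparable to $\log(e+N)$, with constants depending on $n$ and $\Omega$.

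The main obstacle is the lower count. The block argument must stay inside $\calJ(A_N)$, which requires the top occupied band $K_N$ to be reached, and it must be uniform in both $N$ and $a$. Both points are handled by the observation that $\lambda_N$ occupies band $K_N$ and that every $\calJ(A)$ index at most $K_N$ is retained by $A_N$.
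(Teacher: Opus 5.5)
Your proposal is correct and follows essentially the paper's proof: you count occupied bands up to $\min\{m,K_N\}$, use the gap condition to show a fixed fraction of them is occupied, let band $0$ absorb the low indices, bound the tail by $O(a^2)$, and then invoke Theorem~\ref{thm:L-N} and the eigenvalue growth~\eqref{eq:eigen-growth}. Your explicit identification $\calJ(A_N)=\{k\in\calJ(A):k\le\lfloor\log_2\lambda_N\rfloor\}$ simply makes precise the ``endpoint convention for the last band'' that the paper leaves implicit.
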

\begin{proof}
Set $m=\lfloor\log_2(\min\{\lambda_N,a^{-1}\})\rfloor$. The occupied bands below $m$ each contribute $a^2$ in~\eqref{eq:rho}; there are at most $m+1$ of them and, by~\eqref{eq:gaps}, at least a fixed positive fraction of $m+1$, up to a fixed number of low bands. The band $0$ handles that fixed number. If $a^{-1}<\lambda_N$, the bands above $\lfloor\log_2(a^{-1})\rfloor$ contribute a geometric tail bounded by $Ca^2$. If $\lambda_N\leq a^{-1}$, all retained bands are already included, apart from an inessential endpoint convention for the last band. This proves~\eqref{eq:phase-law}. The same count gives $n_N\asymp1+\log\lambda_N$, and~\eqref{eq:L-log} follows from Theorem~\ref{thm:L-N}. Equation~\eqref{eq:eigen-growth} proves the final statement.
\end{proof}

For $a\lambda_N\leq1$, the law~\eqref{eq:phase-law} is $a\sqrt{1+\log\lambda_N}$. For $a\lambda_N\geq1$, it is the continuum modulus $a\sqrt{\log(e/a)}$. Both regimes refer to perturbing the memory law; the spatial approximation error is a separate quantity.

\begin{example}[Sparse spectra]\label{ex:sparse}
On $\ell^2$, let $\lambda_1=1$ and $\lambda_{j+2}=2^{2^j}$ for $j\geq0$. Then the occupied bands are $0,1,2,4,8,\ldots$, and~\eqref{eq:rho} gives
\begin{equation}\label{eq:sparse-modulus}
 \rhoA(a)\asymp a\sqrt{\log(e+\log(1/a))},\qquad 0<a\leq1.
\end{equation}
To see this, count the occupied indices below $\log_2(a^{-1})$; their number is comparable to $\log(e+\log(1/a))$, and the remaining geometric tail is $O(a^2)$ after squaring. Each retained eigenvalue occupies its own band, so $L_{N,1}(T)\asymp\sqrt N$. In contrast, $\sqrt{\log\lambda_N}$ is exponentially large in $N$. Thus a formula using only the spectral radius is not valid for arbitrary compact-resolvent generators.
\end{example}

\subsection{Atomic quadrature of the memory measure}
We consider only replacement of the delay measure at fixed spatial dynamics. Collocation and product-integration methods for Volterra and functional differential equations are developed in~\cite{brunner2004}; spectral approximation of delay generators and evolution operators for stability analysis is treated in~\cite{breda2015}. Operator-valued Volterra memory is a broader evolution framework~\cite{pruss1993}, distinct from the compact-interval finite-measure feedback here. The estimate below quantifies this specific kernel-replacement step, rather than the full discretization error of those methods.

Let $(I_j)_{j=1}^m$ be a Borel partition of $I$, with $\operatorname{diam}(I_j)\leq\Delta$, and choose $\theta_j\in I_j$. Define
\begin{equation}\label{eq:quad}
 Q_\Delta\mu=\sum_{j=1}^m\mu(I_j)\delta_{\theta_j}.
\end{equation}
Then
\begin{equation}\label{eq:quad-BL}
 \norm{Q_\Delta\mu}_{\TV}\leq\norm\mu_{\TV},
 \qquad d_{\BL}(Q_\Delta\mu,\mu)\leq\Delta\norm\mu_{\TV}.
\end{equation}
The first inequality follows from $\sum_j|\mu(I_j)|\leq|\mu|(I)$. For the second, write the difference against a Lipschitz test function as
\[
 \sum_j\int_{I_j}[q(\theta_j)-q(\theta)]\dd\mu(\theta).
\]
The measure error in~\eqref{eq:quad-BL} is elementary; the spectral response specifies its sharp propagated solution error.

\begin{corollary}[Quadrature upper bounds]\label{cor:quad-upper}
Under Assumption~\ref{ass:reaction}, for fixed $R,M,T$ and $0<\Delta\leq1$,
\begin{equation}\label{eq:quad-upper}
 \sup_{\substack{\norm\mu_{\TV}\leq M\\\norm\phi_X\leq R}}
 \norm{u_\mu^\phi-u_{Q_\Delta\mu}^\phi}_{Y_T}
 \leq C\rhoA(\Delta).
\end{equation}
For the $N$-mode equation the corresponding upper bound is $C\rhoN(\Delta)$, with a constant independent of $N$. For the linear equation on the unit ball of $X_w$, the bounds are $C\rhoAw(\Delta)$ and $C\rho_{A_N,w}(\Delta)$, respectively.
\end{corollary}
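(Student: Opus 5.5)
This corollary should follow by combining the measure estimate \eqref{eq:quad-BL} with the stability results already proved. The only care needed is with the monotonicity and scaling of $\rho$ and with the constants.

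\emph{Semilinear case.} Fix $\mu$ with $\norm\mu_{\TV}\leq M$, and set $\nu=Q_\Delta\mu$.
By \eqref{eq:quad-BL}, $\norm\nu_{\TV}\leq M$, so both kernels lie in the total-variation ball required by Theorem~\ref{thm:semilinear}. Apply Theorem~\ref{thm:semilinear} with $\psi=\phi$. The history term vanishes, which gives
\[
 \norm{u_\mu^\phi-u_{Q_\Delta\mu}^\phi}_{Y_T}\leq C\rhoA(d_{\BL}(\mu,Q_\Delta\mu)).
\]
Since $\rhoA$ is increasing and $d_{\BL}(\mu,Q_\Delta\mu)\leq\Delta\norm\mu_{\TV}\leq M\Delta$, the right side is at most $C\rhoA(M\Delta)$. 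The scaling inequality \eqref{eq:rho-scale} then gives $\rhoA(M\Delta)\leq\max\{1,M\}\rhoA(\Delta)$, so the factor $\max\{1,M\}$ is absorbed into $C$. The constant depends only on $R,M,T,f,B,h,r$, hence the supremum over $\mu$ and $\phi$ is bounded by $C\rhoA(\Delta)$.

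\emph{$N$-mode case.} The same steps apply with $\rhoN$ in place of $\rhoA$, using the final clause of Theorem~\ref{thm:semilinear}. Its constant is independent of $N$ by Proposition~\ref{prop:wellposed}. The functions $\rhoN$ inherit monotonicity and \eqref{eq:rho-scale}, because these properties hold term by term for any occupied-band set.

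\emph{Linear weighted case.} Take $\norm\phi_{X_w}\leq1$ and use Proposition~\ref{prop:linear} with $R_w=1$ and $\psi=\phi$. This gives the bound $C\rhoAw(d_{\BL}(\mu,Q_\Delta\mu))\leq C\max\{1,M\}\rhoAw(\Delta)$. For the truncated version, note that the linear equation preserves $H_N$ and Theorem~\ref{thm:measure} is resolution-uniform. Applying Proposition~\ref{prop:linear} to $A_N$ therefore gives $C\rho_{A_N,w}(\Delta)$ with a constant depending only on $M,T,r,D_w$.

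There is no real obstacle. The one point to check is that the quadrature map preserves the total-variation bound. This matters because every earlier constant depends on $M$; without it, $Q_\Delta\mu$ could leave the admissible ball. The first inequality in \eqref{eq:quad-BL} settles this.
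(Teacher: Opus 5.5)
Your proposal is correct and follows the paper's proof: Theorem~\ref{thm:semilinear} with $\psi=\phi$, the total-variation and $d_{\BL}$ bounds in \eqref{eq:quad-BL}, monotonicity of $\rhoA$, and the scaling inequality \eqref{eq:rho-scale} to absorb $\max\{1,M\}$. For the weighted linear case you cite \eqref{eq:linear-stability} from Proposition~\ref{prop:linear} directly, whereas the paper combines \eqref{eq:linear-weighted} with Corollary~\ref{cor:weighted-semilinear}; this is the same argument packaged differently.
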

\begin{proof}
Apply Theorem~\ref{thm:semilinear},~\eqref{eq:quad-BL}, and the scaling inequality~\eqref{eq:rho-scale}. The weighted assertion follows from~\eqref{eq:linear-weighted} and Corollary~\ref{cor:weighted-semilinear}.
\end{proof}

The following theorem specifies the quadrature rule for which the lower bound is asserted. Let $\Delta=r/m$ and let $Q_\Delta$ be midpoint quadrature on the uniform partition into $m$ cells, with any fixed assignment of cell endpoints. The points used in the proof lie strictly inside their cells, so this assignment does not matter.

\begin{theorem}[Sharp fixed-grid memory quadrature]\label{thm:quad-lower}
Fix $0<T<r/4$. For the linear equation and all sufficiently large $m$,
\begin{equation}\label{eq:quad-sharp}
 \sup_{\substack{\mu\in\calP(I)\\\norm\phi_{X_w}\leq1}}
 \norm{U_\mu(T)\phi-U_{Q_\Delta\mu}(T)\phi}_H
 \asymp\rhoAw(\Delta).
\end{equation}
For the $N$-mode equation the analogous quantity is comparable to $\rho_{A_N,w}(\Delta)$, uniformly in $N$. Consequently, for the Dirichlet Laplacian and $w=1$, the continuum rate is
\begin{equation}\label{eq:quad-sharp-elliptic}
 \Delta\sqrt{\log(e/\Delta)},
\end{equation}
and the joint displacement--resolution rate is obtained by replacing $a$ by $\Delta$ in~\eqref{eq:phase-law}.
\end{theorem}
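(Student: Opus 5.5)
The upper half of~\eqref{eq:quad-sharp} follows directly from Corollary~\ref{cor:quad-upper}. For probability measures, \eqref{eq:quad-BL} gives $d_{\BL}(Q_\Delta\mu,\mu)\leq\Delta$, and $Q_\Delta\mu$ is again a probability measure. Hence Proposition~\ref{prop:linear} with $M=1$, together with monotonicity of $\rhoAw$, bounds the left side by $C\rhoAw(\Delta)$. The $N$-mode version is identical, with $\rho_{A_N,w}$ and constants independent of $N$.

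For the lower half I would reduce to Theorem~\ref{thm:lower}. The idea is to choose a single Dirac mass $\mu=\delta_{-\tau-a}$ that midpoint quadrature moves by a displacement comparable to $\Delta$. Take the cell $I_j=[-j\Delta,-(j-1)\Delta]$ with midpoint $\theta_j=-(j-\tfrac12)\Delta$. Place the atom at $-\tau-a$ strictly inside the same cell, at distance $a=\Delta/4$ on the far side of the midpoint. Concretely, set $\tau=(j-\tfrac12)\Delta$ and $-\tau-a=-(j-\tfrac14)\Delta$. Then $Q_\Delta\delta_{-\tau-a}=\delta_{-\tau}$, and both points are interior to the cell, so endpoint assignment is irrelevant.

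With this choice, $U_\mu(T)\phi-U_{Q_\Delta\mu}(T)\phi$ is, up to sign, exactly the difference $v(T)-u(T)$ from~\eqref{eq:lower-model}. I then need the constraints~\eqref{eq:time-choice}: $T<\tau<r$ and $a\leq\min\{1,T/4,(r-\tau)/2\}$. Since $T<r/4$, I can pick $j$ with $\tau=(j-\tfrac12)\Delta$ close to $r/2$, say within $\Delta$. Then $\tau>T$ and $r-\tau\approx r/2$. For $m$ large, $a=\Delta/4\leq\min\{1,T/4,r/4\}$ holds. This is exactly where "sufficiently large $m$" enters.

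Theorem~\ref{thm:lower} then gives a lower bound $c\rhoAw(\Delta/4)$. The scaling inequality~\eqref{eq:rho-scale} converts this to $\tfrac c4\rhoAw(\Delta)$. One point needs care: the constants in Theorem~\ref{thm:lower} depend on $T,r,D_w$, but $\tau$ varies slightly with $m$. I would check that the proof uses $\tau$ only through the support condition on $\phi_a$ and the fact that the delayed arguments are negative on $[0,T]$. Both remain valid uniformly for $\tau$ in a compact subinterval of $(T,r)$, so the constants stay uniform. The $N$-mode statement follows from the truncation-uniform version of Theorem~\ref{thm:lower}.

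The elliptic rate~\eqref{eq:quad-sharp-elliptic} then follows from Proposition~\ref{prop:elliptic} and~\eqref{eq:rho-dense}. The joint displacement--resolution rate comes from Corollary~\ref{cor:phase} with $a=\Delta$. I expect the main obstacle to be the bookkeeping of the cell geometry. I must make sure the witness atom and its image really satisfy~\eqref{eq:time-choice} with $a\asymp\Delta$, uniformly in $m$, and that the quadrature preserves the probability-measure class used in the supremum.
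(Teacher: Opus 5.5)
Your proposal is correct and follows the paper's proof essentially step for step. The upper bound comes from Corollary~\ref{cor:quad-upper} via $d_{\BL}(Q_\Delta\mu,\mu)\leq\Delta$. For the lower bound, you place a unit atom at $\theta_j-\Delta/4$ in a cell whose midpoint lies near $-r/2$, so midpoint quadrature produces exactly the two-delay pair of Theorem~\ref{thm:lower} with $a=\Delta/4$, and~\eqref{eq:rho-scale} then gives $c\rhoAw(\Delta)$. Your check that the constants of Theorem~\ref{thm:lower} depend only on $T,r,D_w$ and not on $\tau$ is exactly the uniformity the paper invokes when it says~\eqref{eq:time-choice} holds uniformly for large $m$.
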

\begin{proof}
The upper estimate is Corollary~\ref{cor:quad-upper}. Choose a cell whose midpoint $\theta_j$ tends to $-r/2$ as $m\to\infty$, and place a unit atom at $\theta_j-\Delta/4$. Midpoint quadrature moves this atom to $\theta_j$. The two delays are therefore $\tau=-\theta_j$ and $\tau+a$, with $a=\Delta/4$. For large $m$ these lags remain in a fixed compact subinterval of $(T,r)$, and~\eqref{eq:time-choice} holds uniformly. Theorem~\ref{thm:lower} and~\eqref{eq:rho-scale} give the lower bound by $c\rhoAw(\Delta)$. The same argument uses only the retained spectral bands for $A_N$.
\end{proof}

\begin{remark}[Meaning of quadrature optimality]\label{rem:quad-scope}
Theorem~\ref{thm:quad-lower} concerns worst-case measures and histories for this prescribed grid rule. It is not a minimax lower bound over all adaptive atomic representations, and it does not exclude higher-order estimates for one fixed smooth kernel. An adaptive rule that retains an input atom at its original location has zero error for that atom. The lower-bound measure here is chosen to be moved by the prescribed rule.
\end{remark}

\subsection{A joint space--memory approximation estimate}
In this subsection let $A$ have infinitely many eigenvalues. To compare the continuum equation to its spatial projection, let $D\subset X$ be bounded, and set
\begin{equation}\label{eq:projection-defect}
 \varepsilon_N(D)=\sup_{\phi\in D}
           \norm{(I-\Pi_N)\phi}_{C(I;H)}.
\end{equation}
For $\phi\in D$, let $u_{\nu,N}^{\Pi_N\phi}$ solve
\begin{equation}\label{eq:galerkin}
 \partial_t u_N+A_Nu_N+\Pi_N f(u_N)
 =\int_I\Pi_N B(u_N(t+\theta))\dd\nu(\theta)+\Pi_Nh,
 \qquad u_N|_I=\Pi_N\phi.
\end{equation}
Here $\Pi_Nh$ is interpreted through the $V',V$ pairing. Identify $u_N$ with its embedding into $H$.

\begin{theorem}[Separate spatial and memory errors]\label{thm:joint}
Under Assumption~\ref{ass:reaction}, put $z=A^{-1}h$ and $\Lambda_N=\lambda_{N+1}$. For $\norm\mu_{\TV},\norm\nu_{\TV}\leq M$,
\begin{align}
 \sup_{\phi\in D}\norm{u_\mu^\phi-u_{\nu,N}^{\Pi_N\phi}}_{Y_T}
 \leq C\bigl[&\varepsilon_N(D)+\norm{(I-\Pi_N)z}_H
       +\Lambda_N^{-1}\notag\\
       &+\rhoN(d_{\BL}(\mu,\nu))\bigr],\label{eq:joint}
\end{align}
where $C$ is independent of $N,\mu,\nu$. Thus, for $\nu=Q_\Delta\mu$, the final term can be replaced by $C\rhoN(\Delta)$.
\end{theorem}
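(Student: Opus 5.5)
The plan is to split the error with the triangle inequality through the intermediate Galerkin solution $u_{\mu,N}^{\Pi_N\phi}$, which uses the \emph{same} kernel $\mu$:
\[
 \norm{u_\mu^\phi-u_{\nu,N}^{\Pi_N\phi}}_{Y_T}
 \leq\norm{u_\mu^\phi-u_{\mu,N}^{\Pi_N\phi}}_{Y_T}
 +\norm{u_{\mu,N}^{\Pi_N\phi}-u_{\nu,N}^{\Pi_N\phi}}_{Y_T}.
\]
The second term compares two $N$-mode solutions with a common history $\Pi_N\phi$. It is bounded by the $N$-mode version of Theorem~\ref{thm:semilinear}, which gives $C\rhoN(d_{\BL}(\mu,\nu))$. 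The constant is independent of $N$ because Proposition~\ref{prop:wellposed} bounds all Galerkin solutions uniformly, and $\norm{\Pi_N\phi}_X\leq\norm\phi_X$. The statement for $\nu=Q_\Delta\mu$ then follows from~\eqref{eq:quad-BL} and the scaling inequality~\eqref{eq:rho-scale}, exactly as in Corollary~\ref{cor:quad-upper}.

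For the spatial term, write $u=u_\mu^\phi$ and $u_N=u_{\mu,N}^{\Pi_N\phi}$, and split $u-u_N=(I-\Pi_N)u+(\Pi_Nu-u_N)$.

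\emph{High modes.} Project the shifted mild formula~\eqref{eq:shifted-mild} with $I-\Pi_N$. Put $F=-f(u)+D_\mu u$, which is bounded in $C([0,T];H)$ by a constant depending on $R_T$ from Proposition~\ref{prop:wellposed}. For $t\geq0$ this gives
\[
 \norm{(I-\Pi_N)u(t)}\leq 2\norm{(I-\Pi_N)z}+\varepsilon_N(D)
 +\int_0^t e^{-\Lambda_N(t-s)}\norm{F(s)}\dd s .
\]
The last integral is at most $C\Lambda_N^{-1}$. On $I$ the high-mode part is simply $(I-\Pi_N)\phi$, which is bounded by $\varepsilon_N(D)$.

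\emph{Low modes.} Apply $\Pi_N$ to~\eqref{eq:shifted-mild}. Since $\Pi_Nz=A_N^{-1}\Pi_Nh$ is the stationary shift of the Galerkin equation, $\Pi_Nu$ satisfies the same mild formula as $u_N$, except that the nonlinearities are evaluated at $u$ rather than at $u_N$, and the histories agree. Subtracting, using the Lipschitz constants $L_f(R_T)$ and $L_B(R_T)$ on the common ball, and writing $u-u_N=(I-\Pi_N)u+(\Pi_Nu-u_N)$ inside the nonlinearities, the function $Z(t)=\sup_{-r\leq s\leq t}\norm{\Pi_Nu(s)-u_N(s)}$ satisfies
\[
 Z(t)\leq C\int_0^t\Bigl(Z(s)+\sup_{-r\leq\sigma\leq s}\norm{(I-\Pi_N)u(\sigma)}\Bigr)\dd s .
\]
Gronwall together with the high-mode bound then gives $Z(T)\leq C\bigl[\varepsilon_N(D)+\norm{(I-\Pi_N)z}+\Lambda_N^{-1}\bigr]$, and adding the two parts yields the spatial term in~\eqref{eq:joint}.

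The step I expect to need the most care is the high-mode estimate. It uses only $H$-boundedness of $F$, so the decay must come entirely from $\norm{E(t)(I-\Pi_N)}\leq e^{-\Lambda_N t}$; this is where the term $\Lambda_N^{-1}$ originates. The rough forcing $h\in V'$ must also stay out of this estimate, and that is exactly why the shift by $z$ is used. A related point to check is that the Galerkin equation with $\Pi_Nh$ interpreted through the $V',V$ pairing has stationary shift precisely $\Pi_Nz$, so that no additional forcing defect appears. This holds because $A$ commutes with $\Pi_N$.
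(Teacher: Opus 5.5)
Your proposal is correct and follows essentially the same route as the paper. The paper also uses a triangle inequality through $u_{\mu,N}^{\Pi_N\phi}$, bounds the high modes from the projected shifted mild formula with $\norm{E(t)(I-\Pi_N)}\leq e^{-\Lambda_N t}$, applies Gronwall to $\Pi_Nu_\mu^\phi-u_{\mu,N}^{\Pi_N\phi}$ (which has zero initial history), and handles the memory term with the $N$-mode version of Theorem~\ref{thm:semilinear}; your factor $2$ on $\norm{(I-\Pi_N)z}$ is harmless, and the paper avoids it since $\norm{(I-E(t))(I-\Pi_N)z}\leq\norm{(I-\Pi_N)z}$.
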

\begin{proof}
First compare the continuum and projected equations with the same measure $\mu$. Write $Q_N=I-\Pi_N$. By Proposition~\ref{prop:wellposed}, the paths are in a common $H$-ball, and
\[
 F_\mu(s)=-f(u_\mu^\phi(s))+D_\mu u_\mu^\phi(s)
\]
is bounded in $H$ by a common constant $R_F$. Formula~\eqref{eq:shifted-mild}, the commutation of $Q_N$ with $E(t)$, and the spectral tail estimate give, for $t\geq0$,
\begin{align*}
 \norm{Q_Nu_\mu^\phi(t)}
 &\leq\norm{Q_N\phi(0)}+\norm{Q_Nz}
       +R_F\int_0^t e^{-\Lambda_N(t-s)}\dd s\\
 &\leq\varepsilon_N(D)+\norm{Q_Nz}+R_F\Lambda_N^{-1}.
\end{align*}
The same bound holds on negative times, by~\eqref{eq:projection-defect}. Let $p=\Pi_Nu_\mu^\phi-u_{\mu,N}^{\Pi_N\phi}$. It has zero initial history, and subtraction of the projected mild formulas yields
\[
 \norm{p(t)}\leq C\int_0^t
   \sup_{-r\leq s\leq\sigma}
       \bigl(\norm{Q_Nu_\mu^\phi(s)}+\norm{p(s)}\bigr)\dd\sigma.
\]
Gronwall bounds the full spatial error by the first three terms in~\eqref{eq:joint}. Next compare $u_{\mu,N}^{\Pi_N\phi}$ and $u_{\nu,N}^{\Pi_N\phi}$ using Theorem~\ref{thm:semilinear} on $H_N$. This gives the last term. A triangle inequality proves~\eqref{eq:joint}, and~\eqref{eq:quad-BL} proves its quadrature version.
\end{proof}

If $D$ is compact in $X$, then $\varepsilon_N(D)\to0$. One way to see this is to note that $\{\phi(\theta):\phi\in D,\theta\in I\}$ is compact in $H$ and strong convergence of the uniformly bounded projections is uniform on that compact set. If instead $D$ has a uniform $D(A^\beta)$ bound for some $\beta>0$, then
\[
 \varepsilon_N(D)\leq\Lambda_N^{-\beta}
                \sup_{\phi\in D}\norm{A^\beta\phi}_{C(I;H)}.
\]
Also $\norm{Q_Nz}\leq\Lambda_N^{-1/2}\norm h_{V'}$. These facts turn~\eqref{eq:joint} into concrete convergence bounds without imposing a false approximation property on a rough unit ball. Indeed, on the unit ball of $X$, $\varepsilon_N(D)=1$ for every $N$, as witnessed by constant histories equal to $e_{N+1}$. Uniform convergence in $Y_T$, which includes the initial history, cannot hold there. The kernel-stability obstruction and this spatial-projection obstruction are distinct.

\begin{remark}[Uniform approximation of positive-time states]\label{rem:positive-time}
The preceding failure of uniform history-space convergence is not a failure of uniform current-state approximation. For the commuting linear model~\eqref{eq:linear-model}, keep the full time history and truncate only the spatial modes. Uniqueness gives $U_{\mu,N}(T)\Pi_N\phi=\Pi_NU_\mu(T)\phi$. For $\norm\mu_{\TV}\leq M$, $\norm\phi_X\leq1$, and every fixed $T>0$, the mild formula and~\eqref{eq:linear-weighted} therefore give
\begin{equation}\label{eq:positive-time}
 \norm{U_\mu(T)\phi-U_{\mu,N}(T)\Pi_N\phi}_H
 \leq e^{-\lambda_{N+1}T}
       +\frac{Me^{MT}}{\lambda_{N+1}}.
\end{equation}
Indeed, apply $I-\Pi_N$ to the mild formula, use its commutation with $E(t)$, and integrate $e^{-\lambda_{N+1}(T-s)}$ against the bound $Me^{MT}$ for the delayed forcing. The right side tends to zero uniformly over these measures and histories. This is compatible with the growing kernel-Lipschitz constants in Theorem~\ref{thm:L-N}: approximation accuracy at a fixed positive time and uniform Lipschitz sensitivity to the memory law are different properties.
\end{remark}

\begin{remark}[Accuracy and uniform Lipschitz stability]\label{rem:surrogate}
The positive-time convergence in Remark~\ref{rem:positive-time} can be compared with the sharp delay-law modulus to determine whether uniform accuracy is compatible with a common Lipschitz bound. Fix $0<T<r/2$. Suppose an approximation $\widehat F(\mu,\phi)$ to $U_\mu(T)\phi$ has uniform error $\epsilon$ in $H$ on $\mathcal P(I)\times B_X$, where $B_X$ is the unit ball of $X$. Suppose also that it is $L$-Lipschitz in $d_{\BL}$ uniformly over $\phi\in B_X$. The triangle inequality and Corollary~\ref{cor:omega} then give, for small $d$,
\[
 2\epsilon\geq\Omega_{A,1,T}(d)-Ld\geq c\rhoA(d)-Ld.
\]
For unbounded $A$, the occupied-band formula~\eqref{eq:rho} gives $\rhoA(d)/d\to\infty$ as $d\downarrow0$. Thus no sequence of approximations with a common finite bound on these Lipschitz constants can converge uniformly on $\mathcal P(I)\times B_X$. The Galerkin approximations in Remark~\ref{rem:positive-time} converge uniformly while their Lipschitz constants grow as described in Theorem~\ref{thm:L-N}.

This comparison is a consequence of the target modulus and depends on the specified history class. For elliptic spectra, Theorem~\ref{thm:log-hierarchy} gives uniform Lipschitz stability on bounded balls of $C(I;D((1+\log A)^\gamma))$ when $\gamma>1/2$, so the obstruction from the rough history ball does not persist on these weighted classes.
\end{remark}

\section{Conclusion}\label{sec:conclusion}
For positive self-adjoint generators, the weighted occupied-band sum~\eqref{eq:rho} characterizes the norm of an integrated delay perturbation and the worst-case response of a linear memory equation. The exact identity~\eqref{eq:exact-realization} transfers the spectral lower bound to positive point delays without endpoint errors. Elliptic spectra yield the endpoint modulus $d\sqrt{\log(e/d)}$; sparse spectra can give different rates.

The sharpness statement is inseparable from its data class. For the commuting linear model, the reciprocal-square criterion~\eqref{eq:weighted-L-criterion} gives precisely when a spatial weight restores Lipschitz stability. Elliptic spectral density places the logarithmic threshold at $\gamma=1/2$. Thus the result describes how weak spatial regularity changes an endpoint modulus, rather than a general obstruction to approximating evolution operators.

Sharp spectral-truncation constants and prescribed midpoint-quadrature errors follow from the same formula. The joint estimate separates spatial and memory errors;~\eqref{eq:positive-time} shows that positive-time approximation can coexist with increasing memory sensitivity. The semilinear result supplies upper bounds for $H$-valued reactions and feedbacks that are Lipschitz on bounded balls. The sharp analysis does not cover pointwise cubic reactions on $L^2$, nonnormal generators, unbounded delayed feedback, state-dependent delays, or infinite memory.

\backmatter
\section*{Statements and declarations}
\subsection*{Funding}
This work was conducted independently. No funding was received for this study.
\subsection*{Competing interests}
The author has no relevant financial or non-financial interests to disclose.
\subsection*{Data availability}
This is a theoretical study. All results are established by the analytical proofs in the article; no external datasets or computational results are required to reproduce the stated conclusions.

\bibliography{references}
\end{document}